\documentclass[a4paper,fleqn]{cas-sc}

\usepackage[numbers,sort&compress]{natbib}

\usepackage{amsmath,amsthm,amsfonts,amssymb,amsopn,extarrows,mathtools}

\usepackage{hyperref,cleveref}

\usepackage{tikz}
\usetikzlibrary{arrows.meta,positioning,calc}

\usepackage{xcolor,soul}

\def\tsc#1{\csdef{#1}{\textsc{\lowercase{#1}}\xspace}}
\tsc{WGM}
\tsc{QE}
\tsc{EP}
\tsc{PMS}
\tsc{BEC}
\tsc{DE}

\DeclareMathOperator{\rp}{Re}
\DeclareMathOperator{\ip}{Im}
\DeclareMathOperator{\tr}{tr}
\DeclareMathOperator{\spec}{Spec}
\DeclareMathOperator{\Deg}{Deg}
\newcommand{\iu}{\boldsymbol{\mathrm{i}}}
\newcommand{\R}{\mathbb{R}}

\newcommand{\tp}{\mathrm{T}}

\newtheorem{theorem}{Theorem}[section]
\newtheorem{lemma}[theorem]{Lemma}

\newtheorem{corollary}[theorem]{Corollary}

\theoremstyle{definition}

\begin{document}
\let\WriteBookmarks\relax
\def\floatpagepagefraction{1}
\def\textpagefraction{.001}
\shorttitle{New Bounds for the Spectral Radius and Low Energy of the $A_\alpha$-Matrix of Digraphs}
\shortauthors{S. Huang}

\title[mode=title]{New Bounds for the Spectral Radius and Low Energy of the $A_\alpha$-Matrix of Digraphs}



\author[1]{Silin Huang}[orcid=0009-0004-8963-6394]
\fnmark[1]
\ead{huangsilin@stu.jnu.edu.cn}

\affiliation[1]{organization={College of Information Science and Technology, Jinan University},
                city={Guangzhou},
                citysep={}, 
                postcode={510632}, 
                state={Guangdong},
                country={China}}


\begin{abstract}
The $A_\alpha$-matrix of a digraph $D$
is defined as a linear convex combination
$\alpha\Deg(D)+(1-\alpha)A(D)$
of the adjacency matrix $A(D)$
and the diagonal out-degree matrix $\Deg(D)$,
where $\alpha\in[0,1]$.
The low energy of $A_\alpha(D)$ is defined as the sum of
the absolute values of the real parts of the eigenvalues of $A_\alpha(D)$.
In this paper, we establish new upper bounds
for the spectral radius of the $A_\alpha$-matrix
and derive two Koolen--Moulton type upper bounds for its low energy,
together with characterizations of the equality cases.
Numerical comparisons further show that
these bounds can be sharper than existing bounds
for certain digraph families.
Furthermore, when $\alpha=0$,
our results recover several classical bounds,
and in particular, the low-energy bounds generalizes
the classical Koolen--Moulton bound.
\end{abstract}



\begin{keywords}
$A_\alpha$-matrix \sep
generalized adjacency matrix \sep
spectral radius \sep
low energy \sep
graph energy \sep
digraph
\end{keywords}

\maketitle

\section{Introduction}
A \emph{directed graph} (or simply a \emph{digraph}) $D=(V,E)$
consists of a finite vertex set $V$
and an arc set $E\subseteq V\times V$.
Throughout this paper, we consider simple digraphs,
i.e., digraphs without self-loops.
Let $n=|V|$ and $m=|E|$.
For a vertex $v\in V$, let $d^+(v)$ denote its out-degree.
Then the adjacency matrix of $D$ is the $n\times n$ matrix
$A(D)=(a_{uv})$ with $a_{uv}=1$ if $u\to v$ is an arc
and $a_{uv}=0$ otherwise, and
$\Deg(D)=\mathrm{diag}(d^+(v_1),\dots,d^+(v_n))$
is the diagonal matrix of vertex out-degrees.

For $\alpha\in[0,1]$, the \emph{$A_\alpha$-matrix}
of a digraph $D$ is defined by
\begin{equation}
  A_\alpha(D)=\alpha\Deg(D)+(1-\alpha)A(D).
\end{equation}
This matrix is first introduced by Nikiforov~\cite{Nikiforov2017}
for simple undirected graphs
and was subsequently generalized to digraphs by Liu et al.~\cite{Liu2019}.
It unifies several classical matrices, as
$A_0(D)=A(D)$, $2A_{1/2}(D)=Q(D)$
(the signless Laplacian matrix),
$(A_{\alpha_1}(D)-A_{\alpha_2}(D))/(\alpha_1-\alpha_2)=L(D)\,(\alpha_1\ne\alpha_2)$
(the Laplacian matrix),
and $A_1(D)=\Deg(D)$.
It is also called the \emph{generalized adjacency matrix}
in several literature, e.g.,~\cite{Bagipur2022}.

In the past few years, extensive research
has been done on the spectral radius of the $A_\alpha$-matrix
of undirected graphs.
Many bounds on the spectral radius
have been established~\cite{Xue2018,Lin2021,Alhevaz2024}.
Furthermore, the spectral radius is used to determine
the existence of certain subgraph structures~\cite{Zhou2024,Lv2025,Zhang2025}.

There have also been a noticeable amount of research
on the spectral radius of $A_\alpha$-matrix of digraphs.
For instance, Baghipur et al.~\cite{Bagipur2022} gave some of
its sharp upper and lower bounds in terms of parameters such as
the number $n$ of vertices, the number $m$ of arcs,
the out-degrees, the maximum out-degree, and
the second maximum out-degree.
Carmona and Ganie~\cite{Carmona2024} derived some of
its sharp upper and lower bounds in terms of parameters such as
$n,\,m$, and the average $2$-out- and $2$-in-degrees.
Following this line of research,
in this paper, we establish some new sharp upper and lower bounds
in terms of $n,\,m$, the first out-degree Zagreb index $Z$,
and the number $c_2$ of closed walks of length $2$.

Independently, the concept of \textit{graph energy},
which plays an important role in chemical graph theory,
was introduced by Gutman~\cite{Gutman1978}
as the sum of the absolute values
of the eigenvalues of the adjacency matrix.
Since then, numerous variants of definitions of graph energy
have been developed.
For instance, Guo and Zhou~\cite{Guo2020}
introduced the definition of
$E(D)=\sum_{i=1}^n\left|\lambda_i-2\alpha m/n\right|$
for the $A_\alpha$-matrix of undirected graphs;
Xi~\cite{Xi2021} proposed the definition of
$E(D)=\sum_{i=1}^n\lambda_i^2$
for the $A_\alpha$-matrix of digraphs;
Pe\~{n}a and Rada~\cite{PenaRada2008} defined
$E(D)=\sum_{i=1}^n\left|\rp(\lambda_i)\right|$
for the adjacency matrix of digraphs,
which was later extended to the $A_\alpha$-matrix.
Each definition has its own motivation and advantages,
which we will not elaborate on here.
In this paper, we adopt the definition of Pe\~{n}a and Rada's,
\begin{equation}
  E(D)=\sum_{i=1}^n\left|\rp(\lambda_i)\right|,
  \qquad \lambda_i\in\spec(A_\alpha(D)),\,i=1,2,\dots,n,
\end{equation}
which is also called the \emph{low energy}
in Brauldi's survey~\cite{Brauldi2010}.
Remark that, in the case of symmetric digraphs,
which are equivalent to undirected graphs,
this reduces to the classical Gutman-type graph energy form,
which is one of the reasons that this definition is natural.

The energy of the $A_\alpha$-matrix of undirected graphs
has been much studied;
readers may refer to~\cite{Pirzada2021,Yang2024}.
As for the low energy of the $A_\alpha$-matrix of digraphs,
Pirzada and Mushtaq~\cite{Pirzada2025} investigated the low energy
of some specific classes of digraphs
and obtained an upper bound for the low energy of regular digraphs.
Following this line of research,
we obtain two new upper bounds for the low energy of any digraph,
which is more general and even improve existing bounds in~\cite{Pirzada2025}.
We also completely characterize the equality cases.

It is worth mentioning that for $\alpha=0$,
our bounds for spectral radius and low energy
reduce to several classical bounds for the adjacency matrix.
Specifically, the low energy bounds we obtained
may be viewed as generalization of the classical Koolen--Moulton bound.

This paper is organized as follows.
Section~\ref{sec:2}
introduces notation, lists some preliminary facts,
and investigate the spectra of some special digraph families
that will be useful afterwards.
In Section~\ref{sec:3},
we derive several upper and lower bounds
for the $A_\alpha$-matrix spectral radius
and characterize the extremal graphs.
Section~\ref{sec:4}
is devoted to Koolen--Moulton type bounds
for the $A_\alpha$-matrix low energy.
In Section~\ref{sec:5},
we introduce some classes of digraphs
which achieve equality in the Koolen--Moulton type bounds
in Section~\ref{sec:4}.

\section{Preliminaries}\label{sec:2}
\subsection{Notations and Definitions}
Throughout the paper, our discussion focuses on
directed graphs~(digraphs) without self-loops.
For a digraph $D(V,E)$, we adopt the following notation:

\begin{itemize}
  \item $n=|V|$, its order; $m=|E|$, its size;
  $d^+(v)$, the out-degree of a vertex $v$;
  $d^-(v)$, the in-degree of a vertex $v$;
  $N^+(u)=\{v\in V:u\to v\}$ and $N^-(u)=\{v\in V:v\to u\}$,
  the out-neighborhood and in-neighborhood of vertex $u$, respectively;

  \item $A(D)$, its adjacency matrix;
  $\Deg(D)$, the diagonal matrix of vertex out-degrees of $D$;
  $A_\alpha(D)=\alpha\Deg(D)+(1-\alpha)A(D)$, its $A_\alpha$-matrix;

  \item $M_k=\tr(A_\alpha(D)^k)$, the $k$-th spectral moment of $A_\alpha(D)$;
  $Z=Z_1^+=\sum_{v\in V}(d^+(v))^2$,
  the (First Out-degree) Zagreb index of $D$;
  $c_2=\sum_{u\in V}\sum_{v\in V:u\sim v} A(D)_{u,v}A(D)_{v,u}$,
  the number of closed walks of length $2$ in $D$,
  which equals to twice the number of digons of $D$,
  where $u\sim v$ means there exists arc $u\to v$ and/or $v\to u$;

  \item $\lambda_1,\lambda_2,\dots,\lambda_n$,
  the eigenvalues of $A_\alpha(D)$;
  $(x_i,y_i)=(\rp(\lambda_i),\ip(\lambda_i))$,
  the real part and imaginary part of the eigenvalue $\lambda_i$;
  $\spec(D)$, the spectrum of $A_\alpha(D)$;
  $\rho(D)$, the spectral radius of $A_\alpha(D)$;
  $E(D)$, the low energy of $A_\alpha(D)$.
\end{itemize}

\subsection{Basic Spectral Properties of the $A_\alpha$-Matrix}
We give the expressions for the first and second
spectral moment and the Frobenius norm of $A_\alpha$-matrix,
which will be frequently used for the rest of the paper.

\begin{lemma}\label{lem:m1}
  For a digraph $D$, we have $M_1=\alpha m$.
\end{lemma}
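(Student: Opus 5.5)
The plan is to compute $M_1=\tr(A_\alpha(D))$ directly from the definition and linearity of the trace, without using any spectral information. Since $A_\alpha(D)=\alpha\Deg(D)+(1-\alpha)A(D)$, we have
\begin{equation*}
  M_1=\tr\bigl(A_\alpha(D)\bigr)=\alpha\,\tr\bigl(\Deg(D)\bigr)+(1-\alpha)\,\tr\bigl(A(D)\bigr).
\end{equation*}

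The two traces are then evaluated separately. First, $D$ has no self-loops, so every diagonal entry $a_{vv}$ of $A(D)$ is zero and $\tr(A(D))=0$. Second, $\Deg(D)$ is diagonal with entries $d^+(v)$, so $\tr(\Deg(D))=\sum_{v\in V}d^+(v)$. Each arc $u\to v$ contributes exactly $1$ to the out-degree of its tail $u$ and to no other out-degree, so this sum counts every arc exactly once and equals $m$ (the directed handshake identity). Substituting both values gives $M_1=\alpha m$.

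No step here is a genuine obstacle. The only point needing care is the handshake identity for out-degrees, and it follows immediately by double counting arcs according to their tails. If one also wants the spectral interpretation, $M_1=\sum_{i=1}^n\lambda_i$ by the standard relation between the trace and the eigenvalues, but this is not required for the identity as stated.
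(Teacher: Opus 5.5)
Your proposal is correct and is essentially the paper's proof: both expand $\tr(A_\alpha)$ by linearity and use $\tr(\Deg(D))=m$ and $\tr(A(D))=0$, the latter because $D$ has no self-loops. Your double-counting justification of $\sum_{v}d^+(v)=m$ only spells out a step the paper leaves implicit.
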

\begin{proof}
  By direct calculation, we have
  \begin{equation}
    M_1=\tr(A_\alpha)
    =\tr(\alpha\Deg+(1-\alpha)A)
    =\alpha\underbrace{\tr(\Deg)}_{m}+(1-\alpha)\underbrace{\tr(A)}_{0}
    =\alpha m.
  \end{equation}
\end{proof}

\begin{lemma}\label{lem:m2}
  For a digraph $D$, we have $M_2=\alpha^2 Z+(1-\alpha)^2 c_2$.
\end{lemma}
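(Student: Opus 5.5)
We expand the square of $A_\alpha=\alpha\Deg+(1-\alpha)A$ and take traces term by term, exactly as in the proof of Lemma~\ref{lem:m1}. By linearity of the trace and the cyclic property $\tr(\Deg A)=\tr(A\Deg)$, we get
\begin{equation*}
  M_2=\tr(A_\alpha^2)
  =\alpha^2\tr(\Deg^2)+2\alpha(1-\alpha)\tr(\Deg A)+(1-\alpha)^2\tr(A^2).
\end{equation*}
We then identify each of the three traces with a combinatorial quantity.

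The first trace is immediate: $\Deg^2=\mathrm{diag}\bigl((d^+(v_1))^2,\dots,(d^+(v_n))^2\bigr)$, so $\tr(\Deg^2)=\sum_{v}(d^+(v))^2=Z$. For the cross term, $\Deg$ is diagonal, so $(\Deg A)_{uu}=d^+(u)\,a_{uu}=0$ because $D$ has no self-loops. Hence $\tr(\Deg A)=0$ and the mixed term vanishes entirely.

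For the last term, $\tr(A^2)=\sum_{u}\sum_{v}a_{uv}a_{vu}$. A summand is nonzero only if $u\to v$ and $v\to u$ are both arcs, in which case $u\sim v$ certainly holds. Restricting the inner sum to $v$ with $u\sim v$ therefore changes nothing, so the sum equals the paper's definition of $c_2$. Combining the three pieces gives $M_2=\alpha^2Z+(1-\alpha)^2c_2$.

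There is no real obstacle. The only points needing care are noting that the cross term vanishes because the diagonal of $A$ is zero, and matching $\tr(A^2)$ with the slightly unusual indexing ($u\sim v$) in the definition of $c_2$.
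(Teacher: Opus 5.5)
Your proposal is correct and follows essentially the same route as the paper: expand $\tr\bigl((\alpha\Deg+(1-\alpha)A)^2\bigr)$, use the cyclic property to merge the cross terms, and identify $\tr(\Deg^2)=Z$, $\tr(\Deg A)=0$ and $\tr(A^2)=c_2$. You give slightly more justification than the paper for why the cross term vanishes and why $\tr(A^2)$ matches the $u\sim v$ indexing in the definition of $c_2$.
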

\begin{proof}
  This comes directly from
  \begin{align}
    M_2&=\tr(A_\alpha^2)=\tr([\alpha\Deg+(1-\alpha)A]^2)
    =\tr(\alpha^2\Deg^2+(1-\alpha)^2 A^2+\alpha(1-\alpha)(\Deg A+A\Deg)) \\
    &=\alpha^2\tr(\Deg^2)+(1-\alpha)^2\tr(A^2)+2\alpha(1-\alpha)\tr(\Deg A),
  \end{align}
  where
  \begin{equation}
    \tr(\Deg^2)=\sum_{i=1}^n (d_i^+)^2=Z,
    \qquad
    \tr(A^2)=\sum_{u\in V}\sum_{v\in V:u\sim v} A_{u,v}A_{v,u}=c_2,
    \qquad\text{and}\qquad
    \tr(\Deg A)=0.
  \end{equation}
\end{proof}

\begin{lemma}
  The Frobenius norm of $A_\alpha$ is given by
  \begin{equation}
    \|A_\alpha\|_F=\sqrt{\sum_{u,v\in V}|(A_\alpha)_{uv}|^2}
    =\sqrt{\alpha^2 Z+(1-\alpha)^2 m}.
  \end{equation}
\end{lemma}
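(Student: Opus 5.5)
The plan is to compute the sum of squared entries directly, splitting the matrix into its diagonal and off-diagonal parts. Since $D$ has no self-loops, $A(D)$ has zero diagonal. Hence the diagonal entries of $A_\alpha$ come only from $\alpha\Deg(D)$, and the off-diagonal entries come only from $(1-\alpha)A(D)$. The two parts have disjoint supports, so no cross terms arise when the entries are squared.

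First, the diagonal contribution. For each vertex $u$ we have $(A_\alpha)_{uu}=\alpha d^+(u)$, so
\begin{equation}
  \sum_{u\in V}|(A_\alpha)_{uu}|^2=\alpha^2\sum_{u\in V}(d^+(u))^2=\alpha^2 Z .
\end{equation}

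Next, the off-diagonal contribution. For $u\neq v$ we have $(A_\alpha)_{uv}=(1-\alpha)a_{uv}$ with $a_{uv}\in\{0,1\}$. Therefore $a_{uv}^2=a_{uv}$, and
\begin{equation}
  \sum_{u\neq v}|(A_\alpha)_{uv}|^2=(1-\alpha)^2\sum_{u\neq v}a_{uv}=(1-\alpha)^2 m ,
\end{equation}
because the entries of $A(D)$ sum to the number of arcs.

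Adding the two contributions and taking the square root gives the claimed formula. Equivalently, one can write $\|A_\alpha\|_F^2=\tr(A_\alpha A_\alpha^{\tp})$ and expand it in the same way as in the proof of Lemma~\ref{lem:m2}. The terms are then $\alpha^2\tr(\Deg^2)=\alpha^2 Z$, the cross term $2\alpha(1-\alpha)\tr(\Deg A^{\tp})=0$ (since $A$ has zero diagonal), and $(1-\alpha)^2\tr(AA^{\tp})=(1-\alpha)^2 m$.

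No step is a genuine obstacle. The only points needing care are the vanishing of the cross term, which relies on the absence of loops, and the idempotence $a_{uv}^2=a_{uv}$ of the $0/1$ entries.
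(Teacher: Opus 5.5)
Your proposal is correct and matches the paper's approach: the paper simply states that the identity follows by direct calculation, which is exactly your entrywise computation. The diagonal entries contribute $\alpha^2 Z$ and the off-diagonal $0/1$ entries contribute $(1-\alpha)^2 m$, with no cross terms because there are no loops.
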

\begin{proof}
  This can be obtained by direct calculation.
\end{proof}

\subsection{Spectra of Two Special Families of Digraphs}
\begin{lemma}\label{lem:spectrum-complete-digraph}
  Let $D$ be a digraph of order $n$.
  Then the spectrum of $A_\alpha=A_\alpha(D)$ is of the form
  $\{n-1,(\alpha n-1)^{(n-1)}\}$
  if and only if $D=\overleftrightarrow{K_n}$
  is the complete symmetric digraph on $n$ vertices.
\end{lemma}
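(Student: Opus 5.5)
The plan is to verify the spectrum directly for $\overleftrightarrow{K_n}$. For the converse I will use the spectral moments from Lemma~\ref{lem:m1} and Lemma~\ref{lem:m2}, together with the fact that a loopless digraph on $n$ vertices has at most $n(n-1)$ arcs.

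\emph{Sufficiency.} For $D=\overleftrightarrow{K_n}$ we have $A(D)=J-I$, where $J$ is the all-ones matrix, and $\Deg(D)=(n-1)I$. Hence
\begin{equation*}
  A_\alpha=\alpha(n-1)I+(1-\alpha)(J-I)=(\alpha n-1)I+(1-\alpha)J .
\end{equation*}
The eigenvalues of $J$ are $n$ (eigenvector $\mathbf{1}$) and $0$ with multiplicity $n-1$. So the spectrum of $A_\alpha$ is $\{(\alpha n-1)+(1-\alpha)n,\,(\alpha n-1)^{(n-1)}\}=\{n-1,(\alpha n-1)^{(n-1)}\}$.

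\emph{Necessity.} Suppose the spectrum of $A_\alpha(D)$ is $\{n-1,(\alpha n-1)^{(n-1)}\}$. Its trace is $(n-1)+(n-1)(\alpha n-1)=\alpha n(n-1)$, and by Lemma~\ref{lem:m1} this equals $M_1=\alpha m$.

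If $\alpha>0$, this gives $m=n(n-1)$. Since $D$ has no loops, every ordered pair of distinct vertices must then be an arc, so $D=\overleftrightarrow{K_n}$.

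If $\alpha=0$, the first moment carries no information, so I switch to the second moment. The spectrum becomes $\{n-1,(-1)^{(n-1)}\}$, so
\begin{equation*}
  M_2=(n-1)^2+(n-1)=n(n-1).
\end{equation*}
By Lemma~\ref{lem:m2} with $\alpha=0$, $M_2=c_2$. Each closed walk of length $2$ uses a pair of opposite arcs $u\to v$ and $v\to u$, so $c_2\le m\le n(n-1)$. Equality throughout forces every arc to lie in a digon and $m=n(n-1)$, hence again $D=\overleftrightarrow{K_n}$.

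The only delicate point is this $\alpha=0$ degeneracy, where one must move from $M_1$ to $M_2$. The degenerate case $n=1$ is trivial.
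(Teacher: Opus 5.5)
Your proof is correct and follows essentially the same route as the paper: the trace $M_1=\alpha m$ gives $m=n(n-1)$ when $\alpha>0$, and the second moment $c_2=n(n-1)$ handles the degenerate case $\alpha=0$. Your explicit sufficiency computation via $A_\alpha=(\alpha n-1)I+(1-\alpha)J$ and your bound $c_2\le m\le n(n-1)$ just spell out details that the paper leaves as ``direct calculation'' and a digon count.
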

\begin{proof}[Proof of Necessity ($\implies$).]
  \vspace{0.5em}\noindent\textbf{Case $\alpha=0$.}
  On one hand, the trace of $A_\alpha^2$ is given by
  \begin{equation}
    \tr(A_\alpha^2)=\sum_{i=1}^n\lambda_i^2=(n-1)^2+(n-1)(0n-1)^2=n(n-1).
  \end{equation}
  On the other hand, by Lemma~\ref{lem:m2},
  $\tr(A_\alpha^2)=c_2$.
  Therefore $c_2=n(n-1)$,
  implying that there are $\frac{c_2}{2}=\frac{n(n-1)}{2}=\binom{n}{2}$
  digons in $D$.
  Such a digraph is uniquely
  the complete symmetric digraph $\overleftrightarrow{K_n}$.

  \vspace{0.5em}\noindent\textbf{Case $\alpha\in(0,1]$.}
  On one hand, the trace of $A_\alpha$ is given by
  $\tr(A_\alpha)=\sum_{i=1}^n\lambda_i=(n-1)+(n-1)(\alpha n-1)=\alpha n(n-1)$.
  On the other hand, by definition,
  $\tr(A_\alpha)=\sum_{v\in V}\alpha d_v^+=\alpha m$.
  Equating the two yields $\alpha m=\alpha n(n-1)$.
  Since $\alpha\ne 0$, we have $m=n(n-1)$.
  A simple digraph with $n$ vertices and $n(n-1)$ arcs
  is uniquely the complete symmetric digraph $\overleftrightarrow{K_n}$.
\end{proof}
\begin{proof}[Proof of Sufficiency ($\impliedby$).]
  This is easy by direct calculation.
\end{proof}

\begin{lemma}\label{lem:spectrum-complete-digraph-gen}
  Let $D$ be a digraph of order $n$.
  Suppose $\alpha\in[0,1)$ and $A_\alpha=A_\alpha(D)$ is a normal matrix.
  Then the spectrum of $A_\alpha(D)$ is of the form
  $\{\rho,\lambda,\dots,\lambda\}$ with $\rho\ne\lambda$
  if and only if (i) $D\cong\overleftrightarrow{K_n}$;
  or (ii) $D\cong\overleftrightarrow{K_k}\cup(n-k)K_1$ and $\alpha=1/k$.
\end{lemma}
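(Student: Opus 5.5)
Since $A_\alpha$ is normal, it is unitarily diagonalizable. I will use this to turn the spectral hypothesis into a rank-one structural statement about $A_\alpha$, and then read off the digraph entrywise. Suppose the spectrum is $\{\rho,\lambda,\dots,\lambda\}$ with $\rho\ne\lambda$, where $\rho$ is a simple eigenvalue. Unitary diagonalization gives
\begin{equation*}
  A_\alpha-\lambda I=(\rho-\lambda)\,ww^{*}
\end{equation*}
for a unit eigenvector $w$ of $\rho$. Because $A_\alpha$ is real, $\rho$ and $\lambda$ are real: the non-real eigenvalues come in conjugate pairs, and $\lambda$ has multiplicity $n-1$ while $\rho$ has multiplicity one. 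Hence $w$ may be taken real, and $A_\alpha-\lambda I=c\,ww^{\tp}$ with $c=\rho-\lambda\neq 0$ is a real symmetric rank-one matrix.

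Next I compare entries. Off the diagonal, $(1-\alpha)a_{uv}=c\,w_uw_v$. Since $\alpha<1$, this gives $a_{uv}=a_{vu}$, so $D$ is symmetric. Let $S=\{u: w_u\neq 0\}$ and $k=|S|$. The identity then says $u\to v$ exactly when $u,v\in S$ and $u\ne v$. Therefore $D\cong\overleftrightarrow{K_k}\cup(n-k)K_1$, where the arcs live on $S$.

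The case $k\le 1$ has to be excluded. In that case $D$ has no arcs and $A_\alpha=0$, so all eigenvalues coincide. This contradicts $\rho\neq\lambda$ whenever $n\ge 2$; the case $n=1$ is vacuous.

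It remains to determine $\alpha$ via the diagonal entries, which read $\alpha d^+(u)-\lambda=c\,w_u^2$.
\begin{itemize}
  \item If $k=n$, then $D\cong\overleftrightarrow{K_n}$, which is case (i).
  \item If $k<n$, pick $u\notin S$. Then $d^+(u)=0$ and $w_u=0$, which forces $\lambda=0$. The spectrum of $A_\alpha$ is that of $A_\alpha(\overleftrightarrow{K_k})$ together with $n-k$ zeros. By the sufficiency direction of Lemma~\ref{lem:spectrum-complete-digraph}, the first part is $\{k-1,(\alpha k-1)^{(k-1)}\}$. Since $k\ge 2$, the eigenvalue $\alpha k-1$ actually occurs, and it must equal $\lambda=0$. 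Hence $\alpha=1/k$, which is case (ii).
\end{itemize}

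For sufficiency, note that both digraphs are symmetric, so $A_\alpha$ is symmetric and hence normal. In case (i) the spectrum is $\{n-1,(\alpha n-1)^{(n-1)}\}$ by Lemma~\ref{lem:spectrum-complete-digraph}, and $n-1\neq\alpha n-1$ because $\alpha<1$. In case (ii) the spectrum is $\{k-1,0^{(n-1)}\}$, with $k-1\ne 0$ since $k\ge 2$; here $k\ge2$ holds because $\alpha=1/k<1$.

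The main obstacle is the first step: justifying that normality together with a single repeated eigenvalue gives the exact real rank-one decomposition $A_\alpha-\lambda I=c\,ww^{\tp}$, with real $\lambda$ and real $w$. Everything after that is direct entry comparison.
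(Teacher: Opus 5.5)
Your proof is correct in substance and follows the paper's route. Both arguments write $A_\alpha=(\rho-\lambda)ww^{\tp}+\lambda I$. Both read off from the off-diagonal entries that $D$ is $\overleftrightarrow{K_k}$ on a vertex set $S$ determined by $w$, with every other vertex isolated. Both use an isolated vertex to force $\lambda=0$ and then pin down $\alpha=1/k$. Your variations are minor but tidy:
\begin{itemize}
  \item You take $S$ to be the support of $w$ and split on $k=n$ versus $k<n$. This avoids the paper's appeal to Perron--Frobenius for the nonnegativity of $w$ and for its positivity under strong connectivity.
  \item You obtain $\alpha=1/k$ from the known spectrum of $A_\alpha(\overleftrightarrow{K_k})$. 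The paper instead uses the entry identities $\alpha(k-1)=\rho t^2=1-\alpha$.
\end{itemize}

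Two small points need patching.

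First, your conjugate-pair argument that $\rho$ and $\lambda$ are real fails for $n=2$. There both eigenvalues are simple, so a priori they could form a pair $\mu,\bar\mu$. You can close this with Theorem~\ref{thm:p-f-thm}(i): the spectral radius of the nonnegative matrix $A_\alpha$ is itself an eigenvalue, so one of $\rho,\lambda$ is real. If the other were non-real, its conjugate would be a further eigenvalue, hence equal to the real one, which forces $\rho=\lambda$. (The paper simply asserts a real spectrum at this step.)

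Second, in the case $k<n$, the claim that $\alpha k-1$ must equal $\lambda=0$ needs its one-line justification. The eigenvalue $k-1\ge 1$ is nonzero, so it must be $\rho$. Since $\alpha<1$, we have $\alpha k-1<k-1=\rho$, so $\alpha k-1\ne\rho$ and therefore $\alpha k-1=0$.
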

\begin{proof}[Proof of Necessity ($\implies$).]
  Since $A_\alpha$ is real, normal and has real spectrum,
  $A_\alpha$ is real symmetric
  and $D$ is a symmetric digraph.

  Let $\mathbf u$ be a unit eigenvector of $A_\alpha$
  for the eigenvalue $\rho$.
  Since $A_\alpha$ is symmetric,
  we have the spectral decomposition
  \begin{equation}\label{eq:two-eigs-decomp}
    A_\alpha=(\rho-\lambda)\mathbf u\mathbf u^\tp+\lambda I.
  \end{equation}
  Moreover, since $A_\alpha$ is entrywise nonnegative,
  $\mathbf u$ can be chosen entrywise nonnegative.

  For $i\ne j$, \eqref{eq:two-eigs-decomp} gives
  \begin{equation}\label{eq:offdiag}
    (A_\alpha)_{ij}=(\rho-\lambda)u_i u_j.
  \end{equation}
  Remark that $(A_\alpha)_{ij}\in\{0,1-\alpha\}$ for $i\ne j$.
  Let $S=\{i:u_i>0\}$ and put $k=|S|$.

  We claim that the induced subdigraph on $S$ is $\overleftrightarrow{K_k}$,
  there are no arcs between $S$ and $V\setminus S$ (if not empty),
  and every vertex in $V\setminus S$ (if any) is isolated.
  By~\eqref{eq:offdiag}, if $i,j\in S$ and $i\ne j$,
  then $u_i u_j>0\implies(A_\alpha)_{ij}\ne 0$,
  forcing $i$ and $j$ to form a digon.
  Thus $D[S]\cong \overleftrightarrow{K_k}$.
  If $i\in S$ and $j\notin S$, then
  $u_j=0\implies(A_\alpha)_{ij}=0$.
  So there are no arcs between $S$ and $V\setminus S$.
  If $i,j\notin S$, then
  $u_i=u_j=0\implies(A_\alpha)_{ij}=0$.
  Therefore every vertex in $V\setminus S$ is isolated.

  We now consider whether $D$ is strongly connected.

  \vspace{0.5em}\noindent\textbf{Case 1: $D$ is strongly connected.}
  Then $A$ is irreducible.
  Since $\Deg$ is non-negative,
  $A_\alpha=\alpha\Deg+(1-\alpha)A$ is also irreducible.
  hence by Theorem~\ref{thm:p-f-thm},
  $\mathbf u$ is entrywise positive.
  Thus $S=V$ and $k=n$, so $D=\overleftrightarrow{K_n}$.

  \vspace{0.5em}\noindent\textbf{Case 2: $D$ is not strongly connected.}
  Then $k<n$ and there exists $j\notin S$.
  Since $j$ is isolated, $d^+(j)=0$ and $(A_\alpha)_{jj}=\alpha d^+(j)=0$.
  On the other hand, from~\eqref{eq:two-eigs-decomp} and $u_j=0$
  we have $(A_\alpha)_{jj}=\lambda$. Hence $\lambda=0$.

  Take $i\in S$. Since $D[S]\cong \overleftrightarrow{K_k}$,
  we have $d^+(i)=k-1$,
  and for $i\ne\ell$ in $S$, $(A_\alpha)_{i\ell}=1-\alpha$.
  But, by~\eqref{eq:offdiag} and $\lambda=0$,
  we have $(A_\alpha)_{i\ell}=\rho u_i u_\ell$.
  This forces $u_i u_\ell$ to be constant over all distinct $i,\ell\in S$,
  hence all $u_i$ are equal, say $u_i=t>0$.
  Comparing diagonal and off-diagonal entries for vertices in $S$, we have
  \begin{equation}
    (A_\alpha)_{ii}=\alpha(k-1)=\rho t^2,\qquad
    (A_\alpha)_{i\ell}=1-\alpha=\rho t^2\quad(\ell\in S,i\ne\ell).
  \end{equation}
  Therefore $\alpha(k-1)=1-\alpha\implies\alpha=1/k$.
  This gives $\spec(A_\alpha)=\{k-1,0^{(n-1)}\}$
  and completes the proof.
\end{proof}
\begin{proof}[Proof of Sufficiency ($\impliedby$).]
  This is easy by direct calculation.
\end{proof}

\begin{lemma}\label{lem:spectrum-k2}
  Let $D$ be a digraph of order $n$ and let $A=A(D)$.
  Then the spectrum of $A$ is of the form
  $\{\pm\lambda,\dots,\pm\lambda\}$
  for some $\lambda\in(0,+\infty)$
  if and only if $\lambda=1$ and every strong component of $D$
  is a digon $\overleftrightarrow{K_2}$.
  Equivalently, $D$ can be obtained from a disjoint union of digons
  by adding (possibly none) arcs between distinct digons.
\end{lemma}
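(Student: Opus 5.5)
The plan is to reduce to strong components via the Frobenius normal form, and then apply the Perron--Frobenius theorem (Theorem~\ref{thm:p-f-thm}) to each irreducible diagonal block. Throughout, I read the hypothesis as saying that every eigenvalue of $A$ lies in $\{\lambda,-\lambda\}$.

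\textbf{Sufficiency.} Order the vertices so that the strong components come in a topological order. Then $A$ is permutation-similar to a block upper triangular matrix. Its diagonal blocks are the adjacency matrices of the strong components, and the arcs between distinct components lie strictly above the diagonal blocks. Hence $\spec(A)$ is the multiset union of the spectra of the strong components. If every strong component is a digon, each block is $\begin{pmatrix}0&1\\1&0\end{pmatrix}$, whose spectrum is $\{1,-1\}$. So $\spec(A)=\{\pm1,\dots,\pm1\}$, which is the required form with $\lambda=1$.

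\textbf{Necessity.} By the same block triangularization, each strong component $C$ has its spectrum inside $\{\lambda,-\lambda\}$ with $\lambda>0$, so $A(C)$ has no zero eigenvalue.
\begin{enumerate}
\item A single-vertex component has adjacency matrix $(0)$, whose only eigenvalue is $0$. This is impossible, so $|C|\ge2$.
\item For $|C|\ge2$, $A(C)$ is irreducible and nonnegative. By Theorem~\ref{thm:p-f-thm}, its spectral radius $\rho(C)$ is a simple eigenvalue, and $\rho(C)>0$. Since all eigenvalues have modulus $\lambda$, we get $\rho(C)=\lambda$.
\item The trace of $A(C)$ is $0$, so $-\lambda$ must also occur. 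Thus $A(C)$ has at least two eigenvalues on its spectral circle, and its index of imprimitivity $h$ is at least $2$.
\item The cyclic part of Perron--Frobenius says the eigenvalues of modulus $\rho(C)$ are exactly $\rho(C)\,\omega$ for the $h$-th roots of unity $\omega$, each algebraically simple. Here every eigenvalue has modulus $\rho(C)$, so $A(C)$ has exactly $h$ eigenvalues, namely $|C|=h$. Moreover they are $\lambda e^{2\pi \iu j/h}$ for $j=0,\dots,h-1$.
\item These eigenvalues must all be real, which forces $h\le2$. Combined with $h\ge2$, we get $|C|=2$.
\item A strongly connected digraph on two vertices is the digon $\overleftrightarrow{K_2}$. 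Its spectrum is $\{\pm1\}$, so $\lambda=1$.
\end{enumerate}

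\textbf{Equivalent reformulation.} If every strong component is a digon, then $D$ is a disjoint union of digons plus some arcs between them. Those extra arcs create no directed cycle through distinct digons, since otherwise the digons involved would merge into one strong component. Conversely, starting from disjoint digons, adding arcs between distinct digons that keep the digons as the strong components gives exactly this family. So the reformulation is the usual reading of "adding arcs between digons" as keeping the condensation acyclic.

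\textbf{Main obstacle.} The key step is the simplicity of the peripheral eigenvalues for an irreducible imprimitive matrix (step 4). If Theorem~\ref{thm:p-f-thm} as stated in the paper only gives positivity and simplicity of $\rho$, I would fall back on counting instead.
\begin{itemize}
\item Since $\rho(C)=\lambda$ is simple and the trace is $0$, the multiplicities of $\pm\lambda$ are equal, so $|C|=2$ and $\spec A(C)=\{\lambda,-\lambda\}$.
\item Alternatively, if the peripheral part is unavailable, use $\tr A(C)^2=c_2(C)=|C|\lambda^2$.
\item Also $\det A(C)=\pm\lambda^{|C|}$ is a nonzero integer.
\item Every eigenvalue of a nonnegative integer matrix that equals $\rho$ is an algebraic integer. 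Together with these constraints, this should pin down $|C|=2$ and $\lambda=1$.
\end{itemize}
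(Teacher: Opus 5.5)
Your proposal is correct and is essentially the paper's argument: the paper also block-triangularizes $A$ by strong components and then concludes exactly as in your fallback, namely that simplicity of $\rho(C)=\lambda$ (Theorem~\ref{thm:p-f-thm}(ii)) together with $\tr A(C)=0$ forces $\spec A(C)=\{\lambda,-\lambda\}$, so your steps 4--5 via the index of imprimitivity are a correct but unnecessary detour that relies on a part of Perron--Frobenius not included in Theorem~\ref{thm:p-f-thm}. Your reading of the ``equivalently'' clause, that the added arcs must keep the condensation acyclic, is also right and sharpens the paper's wording: for example, arcs $2\to 3$ and $4\to 1$ between the digons on $\{1,2\}$ and $\{3,4\}$ merge them into one strong component, and the resulting adjacency matrix has two equal rows, hence the eigenvalue $0$.
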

\begin{proof}[Proof of Necessity ($\implies$).]
  Suppose $\spec(A)=\{\pm\lambda,\dots,\pm\lambda\}$ for some $\lambda>0$.
  Let $n_+$ (resp.\ $n_-$) be the multiplicity of $\lambda$ (resp.\ $-\lambda$) in the spectrum.
  Since $A$ has zero diagonal, $\tr(A)=0$.
  Hence
  \begin{equation}
    0=\tr(A)=\sum_{i=1}^n \lambda_i=(n_+-n_-)\lambda,
  \end{equation}
  which implies $n_+=n_-=n/2$ and $n$ is even.

  Let $D_1,\dots,D_t$ be the strong components of $D$.
  Then there exists a permutation matrix $P$
  such that $P^\tp A P$ is block upper triangular
  with irreducible diagonal blocks
  $A_1,\dots,A_t$, where $A_i=A(D_i)$.
  Since eigenvalues of a block upper triangular matrix
  are precisely the union of eigenvalues of its diagonal blocks,
  we have that for each $i$,
  $\spec(A_i)\subseteq\{\lambda,-\lambda\}$.

  Fix $i$. Since $A_i$ is nonnegative and irreducible,
  by Theorem~\ref{thm:p-f-thm}, its spectral radius $\rho(A_i)$
  is an eigenvalue with algebraic multiplicity $1$.
  Therefore we must have $\rho(A_i)=\lambda$,
  and the eigenvalue $\lambda$
  of $\spec(A_i)$ has multiplicity $1$.
  On the other hand, since $\tr(A_i)=0$,
  it holds that $\spec(A_i)=\{\lambda,-\lambda\}$.

  Therefore every strong component $D_i$ has exactly two vertices.
  As $D_i$ is strongly connected,
  it must consist of the two opposite arcs,
  i.e., $D_i\cong \overleftrightarrow{K_2}$.
  The adjacency matrix of $\overleftrightarrow{K_2}$ is
  $\begin{pmatrix}0&1\\1&0\end{pmatrix}$, whose eigenvalues are $\pm 1$.
  Hence $\lambda=1$, completing the proof.
\end{proof}
\begin{proof}[Proof of Sufficiency ($\impliedby$).]
  This is straightforward by reversing the steps above.
\end{proof}

\begin{figure}[htbp]\centering
  \begin{tikzpicture}[
    >={Stealth[round, sep]},
    node distance=1.5cm,
    vertex/.style={
        circle,
        draw=teal!80!black,
        fill=teal!10,
        very thick,
        minimum size=20pt,
        inner sep=0pt,
        font=\small\bfseries\sffamily
    },
    arc/.style={
        ->,
        draw=teal!60,
        thick,
        bend left=15
    },
    inter/.style={
        ->,
        draw=teal!80!black,
        thick,
        bend left=25,
        shorten <=2pt,
        shorten >=2pt
    }
  ]

  \def\n{3}
  \def\hsep{3.5}

  \pgfmathtruncatemacro{\nminusone}{\n-1}

  \foreach \i in {1,...,\n} {

      \pgfmathsetmacro{\xcoord}{(\i-1)*\hsep}

      \pgfmathsetmacro{\idxA}{int(2*\i-1)}
      \node[vertex] (u\i) at (\xcoord, 0) {$v_{\idxA}$};

      \pgfmathsetmacro{\idxB}{int(2*\i)}
      \node[vertex] (v\i) at (\xcoord + 1.8, 0) {$v_{\idxB}$};

      \draw[arc] (u\i) to (v\i);
      \draw[arc] (v\i) to (u\i);

      \node[below=0.8cm of $(u\i)!0.5!(v\i)$,
            font=\footnotesize\sffamily, text=teal!80!black]
          {$(\overleftrightarrow{K_2})_{\i}$};
  }

  \foreach \i [evaluate=\i as \j using int(\i+1)] in {1,...,\nminusone} {
      \draw[inter] (v\i) to (u\j);
  }

  \end{tikzpicture}
  \caption{A digraph obtained from $3$ digons
  by adding arcs between distinct digons.}
  \label{fig:3-digons-strong-components}
\end{figure}
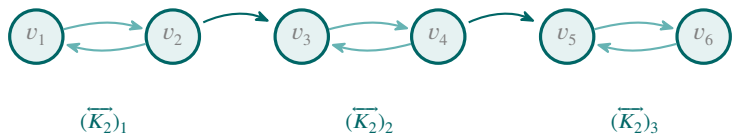

\subsection{Perron--Frobenius Theorem}
Perron--Frobenius Theorem is an important theorem
in matrix theory that has wide applications in many fields
of mathematics and will be used frequently in this paper.

\begin{theorem}[Perron--Frobenius Theorem]\label{thm:p-f-thm}
  Let $M$ be an $n\times n$ entrywise nonnegative matrix,
  and let $\rho=\rho(M)$ denote its spectral radius.
  Then the following hold.
  \begin{itemize}
    \item[(i)] $\rho$ is an eigenvalue of $M$.
    Moreover, there exists a nonnegative eigenvector
    $\mathbf{x}\ne\mathbf{0}$, such that
    $M\mathbf{x}=\rho\mathbf{x}$.
    \item[(ii)] If $M$ is irreducible,
    then $\rho$ is a simple eigenvalue of $M$
    (i.e., it has algebraic multiplicity $1$),
    and there exists a positive eigenvector
    $\mathbf{x}$ such that
    $M\mathbf{x}=\rho\mathbf{x}$.
    \item[(iii)] If $M$ is primitive,
    i.e., $M^k>0$ for some integer $k\ge 1$,
    then $\rho(M)$ is the unique eigenvalue of $M$
    on $\{\lambda:|\lambda|=\rho(M)\}$.
    Equivalently, every eigenvalue
    $\lambda\ne\rho$ of $M$ must satisfy $|\lambda|<\rho$.
  \end{itemize}
\end{theorem}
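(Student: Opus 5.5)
This is the classical Perron--Frobenius theorem, so rather than derive it from scratch I would follow the standard route. I would first prove part (ii) for irreducible matrices and then obtain part (i) by a perturbation argument.

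For (ii), let $M\ge 0$ be irreducible and use the Collatz--Wielandt function
\[
r(\mathbf x)=\min_{i:\,x_i>0}\frac{(M\mathbf x)_i}{x_i},\qquad \mathbf x\ge 0,\ \mathbf x\ne\mathbf 0 .
\]
Because $M$ is irreducible, $(I+M)^{n-1}>0$. I would maximize $r$ over the compact set $\{(I+M)^{n-1}\mathbf x:\mathbf x\ge0,\ \sum_i x_i=1\}$, which consists of positive vectors and on which $r$ is continuous. Call the maximum $r^*$ and let $\mathbf z$ be a maximizer.

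If $M\mathbf z-r^*\mathbf z\ge 0$ were nonzero, multiplying by $(I+M)^{n-1}$ would make it strictly positive. That would give a vector with a strictly larger value of $r$, a contradiction. Hence $M\mathbf z=r^*\mathbf z$ with $\mathbf z>0$.

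Next I would show $r^*=\rho(M)$. If $M\mathbf y=\mu\mathbf y$, then $|\mu|\,|\mathbf y|\le M|\mathbf y|$ entrywise, so $|\mu|\le r(|\mathbf y|)\le r^*$. Here one may apply $(I+M)^{n-1}$ first, since it does not decrease $r$.

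For simplicity of the eigenvalue, I would first show geometric simplicity. If $\mathbf w$ is another real eigenvector for $\rho$, choose $c$ so that $\mathbf z-c\mathbf w\ge0$ has a zero entry. Irreducibility forces a nonnegative eigenvector with a zero entry to vanish, so $\mathbf z=c\mathbf w$. For algebraic simplicity, I would use a positive left eigenvector $\mathbf v$ of $M^{\tp}$. If there were a Jordan chain $(M-\rho I)\mathbf w=\mathbf z$, then $\mathbf v^{\tp}\mathbf z=0$, which is impossible because $\mathbf v,\mathbf z>0$.

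For (i), apply (ii) to $M_\varepsilon=M+\varepsilon J>0$, where $J$ is the all-ones matrix. Normalize the Perron vectors so that $\sum_i x_i=1$ and extract a convergent subsequence as $\varepsilon\to0$. By continuity of the spectral radius (eigenvalues depend continuously on the entries), the limit $\mathbf x\ge0$, $\mathbf x\ne\mathbf 0$, satisfies $M\mathbf x=\rho(M)\mathbf x$.

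For (iii), let $M^k>0$. Suppose $\lambda$ is an eigenvalue with $|\lambda|=\rho$ and eigenvector $\mathbf y$. I would first show $M|\mathbf y|=\rho|\mathbf y|$: the vector $M|\mathbf y|-\rho|\mathbf y|$ is $\ge0$, and it is zero by the extremality argument above. Then, with equality in $|(M^k\mathbf y)_i|\le (M^k|\mathbf y|)_i$ and $M^k>0$, every $y_j$ must have a common argument, so $\mathbf y$ is a scalar multiple of $|\mathbf y|$ and $\lambda=\rho$.

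I expect the main obstacle to be the maximization step in (ii). One has to handle the compactness and continuity of $r$ carefully, which is why I restrict to the image of $(I+M)^{n-1}$, and show that $r$ does not decrease under $(I+M)^{n-1}$. The remaining steps are routine.
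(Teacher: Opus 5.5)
\emph{Verdict: your proof is correct, and the paper gives no proof of its own to compare it with.} The paper simply cites Chapter~7 of Meyer's textbook for this classical result. Your proposal is a correct, self-contained sketch of Wielandt's proof via the Collatz--Wielandt function, and I see no gap in it. Each key step is used correctly:
\begin{itemize}
\item $r$ does not decrease under $(I+M)^{n-1}$, so the maximum over the compact image equals the supremum over all nonnegative nonzero vectors.
\item A nonnegative eigenvector of an irreducible matrix that has a zero entry must vanish.
\item The positive left eigenvector annihilates any Jordan chain at $\rho$.
\item The limit argument with $M+\varepsilon J$ gives (i).
\item The equality case of the triangle inequality for $M^k>0$ gives (iii).
\end{itemize}

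Your route differs from the one many textbooks take. They first prove Perron's theorem for strictly positive matrices. They then transfer it to irreducible matrices via $(I+M)^{n-1}>0$ and the spectral map $\lambda\mapsto(1+\lambda)^{n-1}$, and to primitive matrices via $\lambda\mapsto\lambda^k$. You instead work with irreducible matrices directly. This gives the max--min characterization $\rho(M)=\max_{\mathbf x\ge 0,\,\mathbf x\neq\mathbf 0} r(\mathbf x)$ as a byproduct. The price is the compactness and continuity bookkeeping you already flagged. The textbook route keeps the positive case as a clean core and gets the irreducible and primitive cases by spectral mapping.

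Three small points each deserve a line in a write-up:
\begin{itemize}
\item In (iii), note that $M^k>0$ forces $M$ to be irreducible, since a block-triangular form would persist in all powers. Your extremality step there uses $(I+M)^{n-1}>0$.
\item In the geometric-simplicity step, note that the $\rho$-eigenspace of a real matrix is spanned by real vectors, so restricting to real $\mathbf w$ loses nothing.
\item In (iii), the common-argument conclusion applies to the nonzero entries of $\mathbf y$. This still gives $\mathbf y=e^{\iu\theta}|\mathbf y|$.
\end{itemize}
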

\begin{proof}
  Readers can refer to Chapter~7 of~\cite{Meyer2023}.
\end{proof}

\section{Bounds for the Spectral Radius of the $A_\alpha$-Matrix of Digraphs}\label{sec:3}
We start by giving an important lemma
which gives a simple lower bound of the spectral radius.

\begin{lemma}\label{lem:s-r-lower-bound-2}
  For a digraph $D$, we have
  \begin{equation}
    \rho(D)\ge\frac{\alpha m}{n}.
  \end{equation}
  Equality holds if and only if
  (i) $\alpha=0$ and $D$ is a DAG (directed acyclic graph);
  (ii) $0<\alpha<1$ and $D$ is an empty digraph; or
  (iii) $\alpha=1$ and $D$ is out-regular.
\end{lemma}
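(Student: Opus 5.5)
The inequality follows from the trace identity in Lemma~\ref{lem:m1} together with Theorem~\ref{thm:p-f-thm}(i). Since $A_\alpha$ is entrywise nonnegative, $\rho=\rho(D)$ is itself an eigenvalue. Every eigenvalue satisfies $\rp(\lambda_i)\le|\lambda_i|\le\rho$, so
\begin{equation*}
  \alpha m=M_1=\sum_{i=1}^n\lambda_i=\sum_{i=1}^n\rp(\lambda_i)\le n\rho ,
\end{equation*}
which gives $\rho\ge\alpha m/n$. Equality forces $\rp(\lambda_i)=\rho$ for every $i$. Combined with $|\lambda_i|\le\rho$, this gives $\lambda_i=\rho$ for all $i$, so the spectrum of $A_\alpha$ is $\{\rho^{(n)}\}$ with $\rho=\alpha m/n$. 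The equality analysis therefore reduces to characterizing when $A_\alpha(D)$ has a single eigenvalue of multiplicity $n$.

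For this I will use the strong-component decomposition from the proof of Lemma~\ref{lem:spectrum-k2}. Ordering the vertices by strong components makes $A_\alpha$ block upper triangular, so its spectrum is the union of the spectra of the diagonal blocks.

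Take a nontrivial strong component $D_i$ (one with at least two vertices). Its diagonal block is $\alpha\,\Deg_i+(1-\alpha)A(D_i)$, where $\Deg_i$ records the out-degrees in the whole digraph $D$. When $\alpha<1$ this block is irreducible. By Theorem~\ref{thm:p-f-thm}(ii) its spectral radius is then a simple eigenvalue, so a block of size at least $2$ cannot have all eigenvalues equal. Hence for $\alpha<1$ every strong component is a single vertex, i.e.\ $D$ is a DAG. The spectrum of $A_\alpha$ is then the multiset of diagonal entries $\{\alpha d^+(v)\}$.

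The cases now split as follows.
\begin{itemize}
  \item[$\alpha=0$:] Every DAG has spectrum $\{0^{(n)}\}$, which matches $\rho=0=\alpha m/n$. This gives case (i).
  \item[$0<\alpha<1$:] All $\alpha d^+(v)$ must be equal, so $D$ is an out-regular DAG. A DAG has a sink, so the common out-degree is $0$ and $D$ is empty. This gives case (ii).
  \item[$\alpha=1$:] Here $A_1=\Deg$ is diagonal, so its eigenvalues are the out-degrees. They are all equal if and only if $D$ is out-regular. This gives case (iii).
\end{itemize}
The converses are direct checks. A DAG has nilpotent $A$, so its spectrum is $\{0^{(n)}\}$. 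The empty digraph gives the zero matrix. An out-regular $D$ with $\alpha=1$ gives $\Deg=(m/n)I$.

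The main obstacle is the irreducibility step for $\alpha<1$. Adding the nonnegative diagonal $\alpha\Deg_i$ does not destroy the irreducibility of $(1-\alpha)A(D_i)$ when $1-\alpha>0$, so the Perron root of each nontrivial block is simple, and this simplicity is what rules out cycles. I must make sure the argument does not accidentally require primitivity, which is why I rely on simplicity from Theorem~\ref{thm:p-f-thm}(ii) rather than on part (iii).
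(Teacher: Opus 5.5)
Your proof is correct. The inequality part is essentially the paper's argument. The paper uses $|\sum_i\lambda_i|\le\sum_i|\lambda_i|\le n\rho$, whereas you bound real parts. Your version gives $\lambda_i=\rho$ for all $i$ in the equality case a little more directly, with no need to discuss alignment of arguments or to split on whether $\alpha m=0$.

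For the equality case the paper gives no proof. It only points to the proof of Theorem~\ref{thm:s-r-lower-bound-1}, which works as follows. For $\alpha=0$ it uses the fact that $A$ is nilpotent if and only if $D$ is acyclic. For $0<\alpha<1$ it examines only a sink strong component. A singleton sink contributes the eigenvalue $0$. A larger sink component is primitive, because its diagonal is positive when $\alpha>0$, so Theorem~\ref{thm:p-f-thm}(iii) gives an eigenvalue of strictly smaller modulus.

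Your route instead applies simplicity of the Perron root, Theorem~\ref{thm:p-f-thm}(ii), to \emph{every} nontrivial strong component. This has three advantages:
\begin{itemize}
\item it treats all $\alpha\in[0,1)$ uniformly;
\item it needs no primitivity, and hence no assumption $\alpha>0$;
\item it lands directly on the acyclic structure, after which the diagonal entries $\alpha d^+(v)$ and the existence of a sink settle each case.
\end{itemize}
This works because equality here forces the strong conclusion $\spec(A_\alpha)=\{\rho^{(n)}\}$. Under the weaker constraint $\spec(A_\alpha)\subseteq\{\rho,-\rho\}$ of Theorem~\ref{thm:s-r-lower-bound-1}, simplicity alone would not exclude nontrivial components, since digons survive at $\alpha=0$.
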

\begin{proof}[Proof of the inequality.]
  We have
  \begin{equation}
    \alpha m=M_1=|M_1|=\left|\sum_{i=1}^n\lambda_i\right|\le\sum_{i=1}^n|\lambda_i|\le\sum_{i=1}^n\rho(D)=n\rho(D)
    \implies
    \rho(D)\ge\frac{\alpha m}{n},
  \end{equation}
  completing the proof.
\end{proof}
\begin{proof}[Proof of the equality.]
  The proof is omitted for brevity
  since it is very similar to that of Theorem~\ref{thm:s-r-lower-bound-1},
  and the equality condition plays no role
  in the rest of the paper.
\end{proof}

We now derive a general upper bound
that depends only on the parameters $\alpha,\,n,\,m$ and $Z$.

\begin{theorem}\label{thm:s-r-upper-bound}
  For a digraph $D$ and $\alpha\in[0,1)$, we have
  \begin{equation}
    \rho(D)\le\frac{\alpha m}{n}+\sqrt{\frac{n-1}{n}\left(\alpha^2 Z+(1-\alpha)^2 m-\frac{(\alpha m)^2}{n}\right)}.
  \end{equation}
  The equality holds if and only if (i) $D\cong\overleftrightarrow{K_n}$;
  (ii) $D\cong\overleftrightarrow{K_k}\cup(n-k)K_1$ and $\alpha=1/k$;
  or (iii) $D$ is an empty graph.
\end{theorem}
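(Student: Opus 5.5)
The plan is to combine the Perron--Frobenius theorem with Schur's inequality and a Cauchy--Schwarz step, which leaves a quadratic inequality in $\rho=\rho(D)$. By Theorem~\ref{thm:p-f-thm}(i), $\rho$ is itself an eigenvalue of the nonnegative matrix $A_\alpha$, so I label $\lambda_1=\rho$. By Lemma~\ref{lem:m1}, $\sum_{i\ge2}\lambda_i=\alpha m-\rho$, and this sum is real. Schur's inequality together with the Frobenius norm lemma gives
\begin{equation*}
  \rho^2+\sum_{i=2}^n|\lambda_i|^2\le\sum_{i=1}^n|\lambda_i|^2\le\|A_\alpha\|_F^2=\alpha^2Z+(1-\alpha)^2m=:F^2 .
\end{equation*}
Next I bound the tail from below. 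Using $|\lambda_i|^2\ge(\rp\lambda_i)^2$ and then Cauchy--Schwarz,
\begin{equation*}
  \sum_{i\ge2}|\lambda_i|^2\ge\sum_{i\ge2}(\rp\lambda_i)^2\ge\frac{1}{n-1}\Bigl(\sum_{i\ge2}\rp\lambda_i\Bigr)^2=\frac{(\alpha m-\rho)^2}{n-1}.
\end{equation*}

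Combining the two displays gives $\rho^2+(\alpha m-\rho)^2/(n-1)\le F^2$, that is,
\begin{equation*}
  n\rho^2-2\alpha m\rho+(\alpha m)^2-(n-1)F^2\le0 .
\end{equation*}
Hence $\rho$ is at most the larger root of this quadratic. A short computation shows the reduced discriminant divided by $n^2$ equals $\frac{n-1}{n}\bigl(F^2-\frac{(\alpha m)^2}{n}\bigr)$. This yields exactly the stated bound.

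For the equality case, every inequality in the chain must be tight.
\begin{itemize}
  \item Equality in Schur's inequality forces $A_\alpha$ to be normal.
  \item Equality in $|\lambda_i|\ge|\rp\lambda_i|$ forces $\lambda_2,\dots,\lambda_n$ to be real.
  \item Equality in Cauchy--Schwarz forces them all to be equal, say to $\lambda$.
\end{itemize}
So $A_\alpha$ is normal with spectrum $\{\rho,\lambda,\dots,\lambda\}$. If $\rho\ne\lambda$, Lemma~\ref{lem:spectrum-complete-digraph-gen} applies, since $\alpha<1$ and $A_\alpha$ is normal, and gives cases (i) and (ii). If $\rho=\lambda$, the normal matrix $A_\alpha$ with a single eigenvalue is unitarily diagonalizable to $\rho I$, so $A_\alpha=\rho I$. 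Its off-diagonal entries then vanish, and because $1-\alpha>0$ this means $A(D)=0$, so $D$ is empty, which is case (iii).

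For sufficiency I would compute directly. For $\overleftrightarrow{K_n}$ and for $\overleftrightarrow{K_k}\cup(n-k)K_1$ with $\alpha=1/k$, the matrix is symmetric with the two-valued spectrum from those lemmas, and all steps above become equalities. For the empty digraph, both sides of the bound are $0$.

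The step I expect to be the main obstacle is the equality analysis. One must make sure that tightness really forces normality and reality simultaneously. One must also treat the degenerate case $\rho=\lambda$ separately, because Lemma~\ref{lem:spectrum-complete-digraph-gen} requires $\rho\ne\lambda$; this is also the only place where the hypothesis $\alpha<1$ is essential.
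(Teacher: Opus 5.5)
Your proposal is correct and follows essentially the paper's argument: Perron--Frobenius with $M_1=\alpha m$, Cauchy--Schwarz on the non-Perron eigenvalues, and Schur's inequality give the quadratic in $\rho$, and in the equality case $A_\alpha$ is normal with spectrum $\{\rho,\lambda,\dots,\lambda\}$, after which Lemma~\ref{lem:spectrum-complete-digraph-gen} finishes. The differences are cosmetic. You pass through real parts rather than $|\sum\lambda_i|\le\sum|\lambda_i|$. For $\rho=\lambda$ you get $A_\alpha=\rho I$ directly from normality, which is slightly cleaner than the paper's route via $\rho=\alpha m/n$ and $Z\ge m^2/n$.
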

\begin{proof}[Proof of the inequality.]
  By Lemma~\ref{lem:m1} and Theorem~\ref{thm:p-f-thm}, we have
  \begin{equation}\label{eq:s-r-upper-bound-eq1}
    \sum_{i=1}^n\lambda_i=\rho(D)+\sum_{i=2}^n\lambda_i=\alpha m
    \implies
    (\alpha m-\rho(D))^2=\left|\sum_{i=2}^n\lambda_i\right|^2.
  \end{equation}
  Applying Cauchy--Schwarz inequality, we obtain
  \begin{equation}\label{eq:s-r-upper-bound-eq2}
    \left|\sum_{i=2}^n\lambda_i\right|^2
    \le\left(\sum_{i=2}^n|\lambda_i|\right)^2
    \le(n-1)\sum_{i=2}^n|\lambda_i|^2.
  \end{equation}
  By Schur's inequality, it holds that
  \begin{equation}\label{eq:s-r-upper-bound-eq3}
    \sum_{i=1}^n|\lambda_i|^2=\rho(D)^2+\sum_{i=2}^n|\lambda_i|^2\le\|A_\alpha\|_F^2
    \implies
    \sum_{i=2}^n|\lambda_i|^2\le\|A_\alpha\|_F^2-\rho(D)^2.
  \end{equation}
  Finally,
  \labelcref{eq:s-r-upper-bound-eq1,eq:s-r-upper-bound-eq2,eq:s-r-upper-bound-eq3}
  gives
  \begin{equation}\label{eq:s-r-upper-bound-quadratic}
    (\alpha m-\rho(D))^2\le(n-1)(\|A_\alpha\|_F^2-\rho(D)^2)
    \implies
    n\rho(D)^2-2\alpha m\rho(D)+[(\alpha m)^2-(n-1)\|A_\alpha\|_F^2]\le 0,
  \end{equation}
  yielding
  \begin{equation}\label{eq:s-r-upper-bound-quadratic-2}
    \rho(D)\le\frac{2\alpha m+\sqrt{4(\alpha m)^2-4n[(\alpha m)^2-(n-1)\|A_\alpha\|_F^2]}}{2n}
    =\frac{\alpha m}{n}+\sqrt{\frac{n-1}{n}\left(\|A_\alpha\|_F^2-\frac{(\alpha m)^2}{n}\right)},
  \end{equation}
  which is the desired result.
\end{proof}
\begin{proof}[Proof of the equality condition.]
  Equality holds in \eqref{eq:s-r-upper-bound-eq3}
  if and only if $A_\alpha$ is a normal matrix.
  The second equality holds in \eqref{eq:s-r-upper-bound-eq2}
  if and only if $|\lambda_2|=|\lambda_3|=\dots=|\lambda_n|$.
  The first equality holds in \eqref{eq:s-r-upper-bound-eq2}
  if and only if $\arg\lambda_2=\arg\lambda_3=\dots=\arg\lambda_n$.
  Moreover, when the equality holds, since $\rho(D)\ge\frac{\alpha m}{n}$
  by Lemma~\ref{lem:s-r-lower-bound-2},
  $\rho(D)$ equals to the larger root
  of the quadratic equation in~\eqref{eq:s-r-upper-bound-quadratic}.
  To satisfy these conditions,
  $A_\alpha$ must be a normal matrix with eigenvalues $\rho(D)$
  and $\lambda_2=\lambda_3=\dots=\lambda_n\eqqcolon\lambda$.

  Therefore, if $\rho(D)\ne\lambda$, then
  by Lemma~\ref{lem:spectrum-complete-digraph-gen},
  either (i) $D\cong\overleftrightarrow{K_n}$;
  or (ii) $D\cong\overleftrightarrow{K_k}\cup(n-k)K_1$
  and $\alpha=1/k$.

  If $\rho(D)=\lambda$, then by Lemma~\ref{lem:s-r-lower-bound-2},
  we have
  \begin{equation}
    \alpha m=M_1=\sum_{i=1}^n \lambda_i=n\rho(D)\ge\alpha m
    \implies
    \rho(D)=\frac{\alpha m}{n}.
  \end{equation}
  By~\eqref{eq:s-r-upper-bound-quadratic-2}, this gives
  \begin{equation}
    \|A_\alpha\|_F^2-\frac{(\alpha m)^2}{n}
    =\alpha^2 Z+(1-\alpha)^2 m-\frac{\alpha^2 m^2}{n}=0.
  \end{equation}
  However, notice that
  \begin{equation}
    Z=\sum_v (d^+(v))^2\ge\frac{(\sum_v d^+(v))^2}{n}=\frac{m^2}{n},
  \end{equation}
  therefore
  \begin{equation}
    \frac{\alpha^2 m^2}{n}+(1-\alpha)^2 m-\frac{\alpha^2 m^2}{n}
    =(1-\alpha)^2 m\le 0
    \implies
    (1-\alpha)^2 m=0.
  \end{equation}
  By $1-\alpha\ne 0$, this forces $m=0$ and $D$ is an empty graph.
  This completes the proof.
\end{proof}

This upper bound requires no structural condition on the digraph.
An inspection suggests that this bound should perform
well for digraphs whose degrees are concentrated on
a small number of vertices.
We now present an numerical experiment
on a family of digraphs with this feature.

We propose \emph{core-complete digraphs} as follows.
First, choose a small vertex subset $V_\text{c}$ of fixed size $r$
and make its induced subdigraph
the complete symmetric digraph $\overleftrightarrow{K_r}$.
Call every vertex in $V_\text{c}$ a \emph{core vertex} and
every other vertex a \emph{secondary vertex}.
Then, for each secondary vertex $u$,
choose one core vertex $c_j$ with probability
\begin{equation}
  \Pr(\text{choose }c_j)=\frac{(1-\beta)\beta^{j-1}}{1-\beta^r},
  \qquad
  0<\beta<1,\,j=1,2,\dots,r.
\end{equation}
and form a digon with it.
This is the \emph{truncated geometric distribution}
and it promises that smaller values of $\beta$ make the attachments
more likely to concentrate on fewer core vertices.
Finally, add several additional random arcs
between $V_\text{c}$ and $V\setminus V_\text{c}$,
and within $V\setminus V_\text{c}$.
One can verify that such a digraph is strongly connected.
An illustration of this model is shown in
Figure~\ref{fig:core-complete-schematic}.

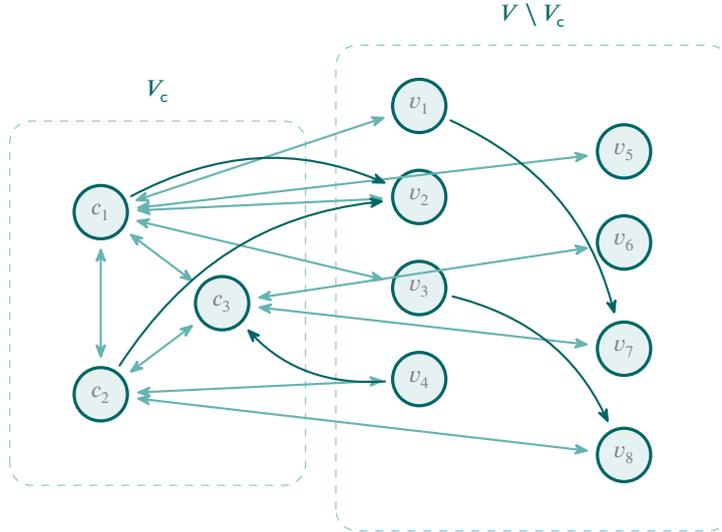
\begin{figure}[htbp]
  \centering
  \begin{tikzpicture}[
    >={Stealth[round, sep]},
    node distance=1.5cm,
    vertex/.style={
        circle,
        draw=teal!80!black,
        fill=teal!10,
        very thick,
        minimum size=20pt,
        inner sep=0pt,
        font=\small\bfseries\sffamily
    },
    digon/.style={
        <->,
        draw=teal!60,
        thick,
        shorten <=2pt,
        shorten >=2pt
    },
    inter/.style={
        ->,
        draw=teal!80!black,
        thick,
        bend left=25,
        shorten <=2pt,
        shorten >=2pt
    }
  ]

    \draw[rounded corners=8pt, dashed, draw=teal!50]
      (-1.2,-2.4) rectangle (2.7,2.4);
    \draw[rounded corners=8pt, dashed, draw=teal!50]
      (3.1,-3.0) rectangle (8.3,3.4);

    \node[font=\small\sffamily, text=teal!80!black] at (0.75,2.8) {$V_\text{c}$};
    \node[font=\small\sffamily, text=teal!80!black] at (5.7,3.8) {$V\setminus V_\text{c}$};

    \node[vertex] (c1) at (0,1.2) {$c_1$};
    \node[vertex] (c2) at (0,-1.2) {$c_2$};
    \node[vertex] (c3) at (1.6,0) {$c_3$};

    \node[vertex] (p1) at (4.2,2.6) {$v_1$};
    \node[vertex] (p2) at (4.2,1.4) {$v_2$};
    \node[vertex] (p3) at (4.2,0.2) {$v_3$};
    \node[vertex] (p4) at (4.2,-1.0) {$v_4$};
    \node[vertex] (p5) at (6.9,2.0) {$v_5$};
    \node[vertex] (p6) at (6.9,0.8) {$v_6$};
    \node[vertex] (p7) at (6.9,-0.6) {$v_7$};
    \node[vertex] (p8) at (6.9,-2.0) {$v_8$};

    \draw[digon] (c1) -- (c2);
    \draw[digon] (c1) -- (c3);
    \draw[digon] (c2) -- (c3);

    \draw[digon] (c1) -- (p1);
    \draw[digon] (c1) -- (p2);
    \draw[digon] (c1) -- (p3);
    \draw[digon] (c1) -- (p5);

    \draw[digon] (c2) -- (p4);
    \draw[digon] (c2) -- (p8);
    \draw[digon] (c3) -- (p6);
    \draw[digon] (c3) -- (p7);

    \draw[inter] (c2) to (p2);
    \draw[inter] (p4) to (c3);
    \draw[inter] (p3) to (p8);
    \draw[inter] (p1) to (p7);
    \draw[inter] (c1) to (p2);

  \end{tikzpicture}
  \caption{A schematic illustration of a core-complete digraph.
  For clarity, each digon is represented by a bidirectional arrow.}
  \label{fig:core-complete-schematic}
\end{figure}

Table~\ref{tbl:nc-1} presents a numerical comparison
of three upper bounds for the $A_\alpha$-matrix spectral radius
on random core-complete digraphs.
For each value of $\beta$,
we generate a large sample ($N=1,000$) of random digraphs
with $n=100,r=5$,
compute the exact spectral radius $\rho(D)$,
and evaluate the relative error $\frac{U}{\rho(D)}-1$,
where $U$ is one of the following bounds:
(i) the bound in Theorem~\ref{thm:s-r-upper-bound} (denoted by B1);
(ii) the bound in Theorem~2.2 of~\cite{Xi2021} (denoted by B2);
and (iii) the bound in Theorem~2.3 of~\cite{Xi2021} (denoted by B3).
Each entry in the table is reported as
$(\text{mean}\pm\text{standard deviation})$
of this relative error.

\begin{table}[htbp]
  \begin{tabular}{lccccc}
    \toprule
     & $\beta=0.80$ & $\beta=0.60$ & $\beta=0.40$ & $\beta=0.20$ & $\beta=0.10$ \\
    \midrule
    B1 & $0.9986\pm 0.0523$ & $0.8162\pm 0.0861$ & $0.5746\pm 0.0724$ & $0.3810\pm 0.0374$ & $0.3071\pm 0.0218$ \\
    B2 & $1.4756\pm 0.1353$ & $1.7752\pm 0.1033$ & $1.9556\pm 0.0478$ & $2.0471\pm 0.0240$ & $2.0771\pm 0.0145$ \\
    B3 & $1.0582\pm 0.0476$ & $1.0241\pm 0.0427$ & $0.9059\pm 0.0437$ & $0.7814\pm 0.0277$ & $0.7257\pm 0.0184$ \\
    \bottomrule
  \end{tabular}
  \begin{tabular}{lccccc}
    \toprule
     & $\beta=0.80$ & $\beta=0.60$ & $\beta=0.40$ & $\beta=0.20$ & $\beta=0.10$ \\
    \midrule
    B1 & $0.6738\pm 0.1208$ & $0.4161\pm 0.1023$ & $0.2234\pm 0.0558$ & $0.1132\pm 0.0224$ & $0.0782\pm 0.0121$ \\
    B2 & $0.4154\pm 0.0164$ & $0.4214\pm 0.0126$ & $0.4233\pm 0.0085$ & $0.4250\pm 0.0048$ & $0.4253\pm 0.0030$ \\
    B3 & $0.3142\pm 0.0249$ & $0.2573\pm 0.0226$ & $0.2069\pm 0.0140$ & $0.1713\pm 0.0073$ & $0.1570\pm 0.0046$ \\
    \bottomrule
  \end{tabular}
  \caption{Numerical comparison of three spectral radius bounds.
  Top: $\alpha=0.3$; Bottom: $\alpha=0.7$.
  Smaller is better.}\label{tbl:nc-1}
\end{table}

The results show that B1 becomes sharper as $\beta$ decreases.
For both $\alpha$,
the relative error of B1 drops significantly when the attachments
become more concentrated on fewer core vertices.
The bound B3 performs much better than B2, but is still weaker
than B1 when $\beta$ is small.
This supports our prediction
that B1 is especially effective for core-complete digraphs.

\begin{corollary}
  When $\alpha=0$, this reduces to a classical upper bound
  for the spectral radius of the adjacency matrix
  of a digraph,
  \begin{equation}
    \rho(A(D))\le\sqrt{\frac{n-1}{n}m}.
  \end{equation}
\end{corollary}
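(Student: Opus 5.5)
The corollary follows by specializing Theorem~\ref{thm:s-r-upper-bound} to $\alpha=0$. This value is allowed, since the theorem is stated for $\alpha\in[0,1)$. We substitute $\alpha=0$ into the right-hand side
\begin{equation*}
  \frac{\alpha m}{n}+\sqrt{\frac{n-1}{n}\left(\alpha^2 Z+(1-\alpha)^2 m-\frac{(\alpha m)^2}{n}\right)}.
\end{equation*}
The first term $\alpha m/n$ vanishes. Inside the root, the terms $\alpha^2 Z$ and $(\alpha m)^2/n$ also vanish, and $(1-\alpha)^2 m$ becomes $m$. The bound therefore reduces to $\sqrt{\frac{n-1}{n}m}$. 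On the left-hand side, $A_0(D)=A(D)$ by definition, so $\rho(D)=\rho(A(D))$. This gives the stated inequality.

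As a sanity check, I would also run the chain of inequalities in the proof of Theorem~\ref{thm:s-r-upper-bound} directly at $\alpha=0$. We have $M_1=\tr(A)=0$, so $\rho(A)=-\sum_{i\ge2}\lambda_i$. Cauchy--Schwarz gives $\rho(A)^2\le(n-1)\sum_{i\ge 2}|\lambda_i|^2$. Schur's inequality gives $\sum_{i}|\lambda_i|^2\le\|A\|_F^2=m$. Combining these yields $\rho(A)^2\le(n-1)(m-\rho(A)^2)$, that is, $n\rho(A)^2\le(n-1)m$. This is the classical argument, so the specialization is consistent.

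If equality cases are also wanted, I would read them off the equality characterization of Theorem~\ref{thm:s-r-upper-bound} at $\alpha=0$. Case (ii) requires $\alpha=1/k$, which is impossible when $\alpha=0$, so it drops out. The remaining cases are $D\cong\overleftrightarrow{K_n}$ or $D$ empty. There is no real obstacle here; the only point needing care is that $\alpha=0$ lies in the admissible range $[0,1)$ of the theorem.
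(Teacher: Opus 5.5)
Your proposal is correct and matches the paper, which gives no separate argument and obtains this corollary simply by setting $\alpha=0$ in Theorem~\ref{thm:s-r-upper-bound}, exactly as you do. Your direct re-run of the trace, Cauchy--Schwarz and Schur chain is a valid check, and your reading of the $\alpha=0$ equality cases ($\overleftrightarrow{K_n}$ or the empty digraph) agrees with that theorem.
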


We also derive a lower bound for the spectral radius.

\begin{theorem}\label{thm:s-r-lower-bound-1}
  Let $D$ be a digraph of order $n$ and let $\alpha\in[0,1]$.
  Then
  \begin{equation}\label{eq:sr-lb1}
    \rho(D)\ge\sqrt{\frac{\alpha^2 Z+(1-\alpha)^2 c_2}{n}}.
  \end{equation}
  The equality holds if and only if one of the following occurs:
  \begin{itemize}
    \item[(i)] $\alpha=0$ and either
    1) $D$ is a DAG (directed acyclic graph);
    or 2) every strong component of $D$ is a digon $\overleftrightarrow{K_2}$;
    \item[(ii)] $0<\alpha<1$ and $D$ is an empty digraph;
    \item[(iii)] $\alpha=1$ and $D$ is out-regular,
    i.e. $d^+(v)$ is constant for all $v\in V$.
  \end{itemize}
\end{theorem}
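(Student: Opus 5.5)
The inequality follows from the second spectral moment. By Lemma~\ref{lem:m2},
\begin{equation*}
  \alpha^2 Z+(1-\alpha)^2 c_2=M_2=\sum_{i=1}^n\lambda_i^2
  =\rp\Bigl(\sum_{i=1}^n\lambda_i^2\Bigr)\le\sum_{i=1}^n|\lambda_i|^2\le n\rho(D)^2 .
\end{equation*}
The third expression is real because $M_2$ is real. The first inequality is $\rp(\lambda_i^2)\le|\lambda_i|^2$, and the second is $|\lambda_i|\le\rho(D)$. Taking square roots gives \eqref{eq:sr-lb1}.

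\emph{Equality.} For equality we need $\rp(\lambda_i^2)=|\lambda_i|^2$ for every $i$, which means every eigenvalue is real. We also need $|\lambda_i|=\rho(D)$ for every $i$. So the spectrum lies in $\{\pm\rho\}$, and I split according to $\alpha$.

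\textbf{Case $\alpha=0$.} If $\rho=0$, then $A$ is nilpotent. This is equivalent to $D$ having no cycles, so $D$ is a DAG. If $\rho>0$, the spectrum is $\{\pm\rho,\dots,\pm\rho\}$, and Lemma~\ref{lem:spectrum-k2} gives exactly that every strong component is a digon.

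\textbf{Case $0<\alpha\le1$.} Here I use the block upper triangular (Frobenius normal) form over the strong components $D_1,\dots,D_t$. The blocks of $A_\alpha$ are $B_j=\alpha\Deg_j'+(1-\alpha)A(D_j)$, where $\Deg_j'$ records the out-degrees in the whole of $D$. Each $B_j$ is nonnegative and irreducible or $1\times1$, and its eigenvalues lie in $\{\pm\rho\}$. By Theorem~\ref{thm:p-f-thm}(ii), $\rho(B_j)$ is a simple eigenvalue of $B_j$. If $\rho(B_j)=\rho>0$, then $B_j$ has spectrum $\{\rho,-\rho,\dots,-\rho\}$, so $\tr B_j\le\rho$, with strict inequality whenever $|D_j|\ge2$. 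This appears to need a further argument.

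A cleaner route is to compare the trace with the count of negative eigenvalues. Let $p$ and $q$ be the numbers of eigenvalues equal to $\rho$ and $-\rho$. Then $\alpha m=\tr A_\alpha=(p-q)\rho$, which gives $\alpha m\le n\rho$. Since $M_2=n\rho^2$, we get $\alpha^2 Z+(1-\alpha)^2c_2=n\rho^2$. Combining this with $Z\ge m^2/n$ gives
\begin{equation*}
  n\rho^2\ge\frac{\alpha^2m^2}{n}+(1-\alpha)^2c_2\ge\frac{\alpha^2m^2}{n}.
\end{equation*}
I would then show that $q=0$. The spectral radius also satisfies $\rho\le\max_v d^+(v)$, since $A_\alpha$ has row sums $d^+(v)$. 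Writing $d_{\max}=\max_v d^+(v)$, the diagonal entries of $A_\alpha$ are $\alpha d^+(v)\ge0$, and each diagonal block $B_j$ has trace $\sum_{v\in D_j}\alpha d^+(v)\ge0$, so $\tr B_j=(p_j-q_j)\rho\ge0$ for each block. In particular $p_j\ge q_j$, and since $p_j\le1$ (simple Perron root) this gives $q_j\le1$, with $q_j=1$ only if $\tr B_j=0$. For $\alpha>0$, however, $\tr B_j=0$ forces every vertex of $D_j$ to have out-degree $0$, so $D_j$ is a single isolated-type vertex with $B_j=(0)$. That in turn forces $\rho=0$, which contradicts $q_j=1$ when $\rho>0$. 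Hence $q=0$, and all eigenvalues equal $\rho$.

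Then $\alpha m=n\rho$, and equality must hold throughout, giving $Z=m^2/n$ and $(1-\alpha)^2c_2=0$. The equation $Z=m^2/n$ means $D$ is out-regular. If $\alpha<1$, we also need $c_2=0$. Out-regularity with degree $d>0$ gives $\rho=d$, whereas equality forces $\rho=\sqrt{\alpha^2 d^2}=\alpha d<d$. This is a contradiction, so $d=0$ and $D$ is empty. If $\alpha=1$, then $A_1=\Deg$ is diagonal and out-regularity is exactly the condition. Sufficiency in all cases is by direct computation.

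The main obstacle is ruling out eigenvalues equal to $-\rho$ when $\alpha>0$. I plan to handle this per strong component using Perron simplicity and the nonnegativity of $\tr B_j$, as above.
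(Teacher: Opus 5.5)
Your proof is correct. The inequality and the case $\alpha=0$ match the paper's proof; your termwise bound $\rp(\lambda_i^2)\le|\lambda_i|^2$ is a slightly cleaner way to see that equality forces real eigenvalues.

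For $0<\alpha<1$ you take a genuinely different route.

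\textbf{The paper's route.} It looks at a single \emph{sink} strong component $S$. A singleton sink has block $[0]$, so $0\in\spec(A_\alpha(D))$. A larger sink has an irreducible block with positive diagonal, hence a primitive one. Theorem~\ref{thm:p-f-thm}(iii), together with simplicity of the Perron root, then gives an eigenvalue of modulus less than $\rho$. So $\rho>0$ is impossible, and the trace gives $m=0$.

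\textbf{Your route.} You treat all diagonal blocks of the Frobenius normal form at once and use only parts (i) and (ii). Perron simplicity gives $p_j\le 1$. Nonnegativity of $\tr B_j=(p_j-q_j)\rho$ then gives $q_j\le 1$. The case $q_j=1$ is excluded because $\tr B_j=0$ with $\alpha>0$ forces a zero $1\times 1$ block. Next, $\alpha m=n\rho$ together with $Z\ge m^2/n$ forces out-regularity, and $\alpha<1$ then forces $d=0$.

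\textbf{What each buys.} Your route needs neither primitivity nor the choice of a sink. It also makes out-regularity, the $\alpha=1$ condition, fall out of the same computation. The paper's argument is shorter and more local. Your argument can be shortened further: once $q_j=0$ for every block, Perron simplicity gives $|V(D_j)|=p_j\le 1$. Hence $D$ is acyclic, a sink vertex contributes eigenvalue $0$, and $\rho>0$ is already contradicted without the Zagreb step.

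\textbf{A small correction.} You state this case for $0<\alpha\le 1$, but at $\alpha=1$ the blocks $B_j$ are diagonal. They are not irreducible when $|V(D_j)|\ge 2$; for instance $\overrightarrow{C_k}$ gives $A_1=I$. So the claim $p_j\le 1$ fails there. This is harmless: for $\alpha=1$ every eigenvalue is some $d^+(v)\ge 0$, so $q=0$ is immediate, and your closing diagonal argument handles $\alpha=1$ directly anyway. Similarly, the step $p_j\ge q_j$ tacitly assumes $\rho>0$. When $\rho=0$, all eigenvalues already equal $\rho$ and the trace gives $m=0$.
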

\begin{proof}[Proof of the inequality.]
  We have
  \begin{equation}
    \alpha^2 Z+(1-\alpha)^2 c_2=M_2=|M_2|
    =\left|\sum_{i=1}^n\lambda_i^2\right|
    \le\sum_{i=1}^n|\lambda_i|^2
    \le\sum_{i=1}^n\rho(D)^2
    =n\rho(D)^2,
  \end{equation}
  which implies~\eqref{eq:sr-lb1}.
\end{proof}

\begin{proof}[Proof of the equality condition.]
  Write $\rho=\rho(D)$.
  The second equality holds if and only if
  $|\lambda_1|=\dots=|\lambda_n|=\rho$.
  The first equality holds if and only if
  all $\arg\lambda_1^2=\arg\lambda_2^2=\dots=\arg\lambda_n^2$.
  However, since
  $\sum_{i=1}^n\lambda_i^2=M_2=\alpha^2 Z+(1-\alpha)^2c_2\in\R_{\ge 0}$,
  it follows that each $\lambda_i^2\in\R_{\ge 0}$.
  Therefore $\lambda_i\in\{\rho,-\rho\}$ and thus
  $\spec(A_\alpha(D))\subseteq\{\rho,-\rho\}$.

  \vspace{0.5em}\noindent\textbf{Case $\alpha=0$.}
  Then $A_\alpha=A(D)$.
  If $\rho=0$, then all eigenvalues of $A(D)$ are $0$, so $A(D)$ is nilpotent.
  For digraph $D$, $A(D)$ is nilpotent if and only if
  $D$ has no directed cycle, i.e., $D$ is a DAG
  (see~\cite{Brauldi1991}, Chapter~9.8).
  If $\rho>0$, then by Lemma~\ref{lem:spectrum-k2},
  it must hold that $\rho=1$ and every strong component of $D$ is a digon
  $\overleftrightarrow{K_2}$.

  \vspace{0.5em}\noindent\textbf{Case $0<\alpha<1$.}
  If $\rho=0$, then all eigenvalues of $A_\alpha(D)$ are $0$,
  so $\tr(A_\alpha(D))=\alpha m=0$.
  Since $\alpha>0$, it must holds that $m=0$,
  i.e., $D$ is an empty digraph.

  Now assume $\rho>0$.
  Let $S$ be a sink strong component of $D$,
  and let $B=A_\alpha(S)$ be the principal submatrix of $A_\alpha(D)$
  indexed by $V(S)$.
  Since $S$ is a strong component, $B$ occurs as
  a diagonal block in the rational normal form of $A_\alpha(D)$.
  This gives $\spec(B)\subseteq\spec(A_\alpha(D))\subseteq\{\rho,-\rho\}$.

  If $|V(S)|=1$, say $V(S)=\{v\}$, then
  $d^+(v)=0,B=[0]$ and thus $0\in\spec(A_\alpha(D))$,
  contradicting $\spec(A_\alpha(D))\subseteq\{\rho,-\rho\}$ with $\rho>0$.

  If $|V(S)|\ge 2$, since $S$ is strongly connected, every vertex in $S$
  has out-degree at least $1$,
  and hence every diagonal entry of $B$ is positive.
  Therefore $B$ is a nonnegative irreducible matrix with a
  positive diagonal entry, and thus $B$ is primitive.
  By Theorem~\ref{thm:p-f-thm}, $B$ has exactly one eigenvalue
  on the circle $|\mu|=\rho(B)$,
  and all other eigenvalues satisfy $|\mu|<\rho(B)$.
  Since $B$ has order at least $2$, it has an eigenvalue
  different from $\rho(B)$,
  forcing some eigenvalue $\mu$ exists with $|\mu|<\rho(B)$.
  This contradicts $\spec(B)\subseteq\{\rho,-\rho\}$.

  Hence $\rho>0$ is impossible, forcing $\rho=0$.
  As shown above,
  this implies $m=0$, i.e., $D$ is empty.

  \vspace{0.5em}\noindent\textbf{Case $\alpha=1$.}
  Then $A_\alpha(D)=\Deg(D)$ is diagonal and its eigenvalues are
  $\{d^+(v):v\in V\}$. Hence
  \begin{equation}
    \rho(D)=\max_{v\in V} d^+(v),\qquad
    \alpha^2 Z+(1-\alpha)^2c_2 = Z = \sum_{v\in V} (d^+(v))^2.
  \end{equation}
  Equality in~\eqref{eq:sr-lb1} becomes equality in
  $\max_{v\in V}d^+(v)\ge \sqrt{\frac{1}{n}\sum (d^+(v))^2}$, which holds if and only if
  $d^+(v)$ is constant for all $v$, i.e., $D$ is out-regular.
  This completes the proof.
\end{proof}

\section{Koolen-Moulton Type Bounds for the Low Energy of the $A_\alpha$-matrix}\label{sec:4}
Low energy is a natural extension of graph energy
to digraphs that preserves Coulson's integral formula~\cite{PenaRada2008}.
Independently, in the setting of undirected graphs
and Gutman's original definition of graph energy,
Koolen and Moulton~\cite{Koolen2001} isolated the largest eigenvalue
and then bounded the contribution of the remaining parts,
yielding a sharper upper bound for the energy.

In this subsection, following the idea of Koolen and Moulton,
we derive two upper bounds for the low energy,
one involving $\rho(D)$ and one independent of $\rho(D)$.

\begin{theorem}\label{thm:k-m-bound}
  For a digraph $D$, we have
  \begin{equation}
    E(D)\le\rho(D)+\sqrt{(n-1)\left(\alpha^2 Z+(1-\alpha)^2\frac{m+c_2}{2}-\rho(D)^2\right)}.
  \end{equation}
  Equality holds if and only if $|x_2|=|x_3|=\dots=|x_n|$
  and $A_\alpha$ is normal.
\end{theorem}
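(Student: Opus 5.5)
The plan is to isolate the Perron eigenvalue, as Koolen and Moulton do, and apply Cauchy--Schwarz to the real parts of the remaining eigenvalues. The one new ingredient is an exact expression for $\sum_i x_i^2$ in terms of the second spectral moment and $\sum_i|\lambda_i|^2$.

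\textbf{Step 1 (sum of squared real parts).} For any complex $\lambda=x+\iu y$ we have $\rp(\lambda^2)=x^2-y^2$ and $|\lambda|^2=x^2+y^2$. Hence
\begin{equation*}
  x^2=\tfrac12\bigl(\rp(\lambda^2)+|\lambda|^2\bigr).
\end{equation*}
Summing over the spectrum, and using that $M_2$ is real, gives
\begin{equation*}
  \sum_{i=1}^n x_i^2=\tfrac12\Bigl(M_2+\sum_{i=1}^n|\lambda_i|^2\Bigr).
\end{equation*}
By Lemma~\ref{lem:m2}, $M_2=\alpha^2Z+(1-\alpha)^2c_2$. By Schur's inequality and the Frobenius norm lemma, $\sum_i|\lambda_i|^2\le\|A_\alpha\|_F^2=\alpha^2Z+(1-\alpha)^2m$. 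Therefore
\begin{equation*}
  \sum_{i=1}^n x_i^2\le\alpha^2Z+(1-\alpha)^2\frac{m+c_2}{2},
\end{equation*}
with equality if and only if $A_\alpha$ is normal.

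\textbf{Step 2 (isolating $\rho$).} Since $A_\alpha$ is entrywise nonnegative, Theorem~\ref{thm:p-f-thm}(i) lets us take $\lambda_1=\rho(D)$, which is real and nonnegative. So $x_1=\rho(D)$ and $|x_1|=\rho(D)$. Then
\begin{equation*}
  E(D)=\rho(D)+\sum_{i=2}^n|x_i|.
\end{equation*}
Cauchy--Schwarz gives
\begin{equation*}
  \sum_{i=2}^n|x_i|\le\sqrt{(n-1)\sum_{i=2}^n x_i^2}=\sqrt{(n-1)\Bigl(\sum_{i=1}^n x_i^2-\rho(D)^2\Bigr)}.
\end{equation*}
Substituting the bound from Step 1 yields the stated inequality. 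The expression under the root is nonnegative, because it is at least $(n-1)\sum_{i\ge2}x_i^2$.

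\textbf{Step 3 (equality).} Equality in the final bound requires equality in both steps, since both enter monotonically. Equality in Cauchy--Schwarz holds if and only if $|x_2|=\dots=|x_n|$. Equality in Schur's inequality holds if and only if $A_\alpha$ is normal. Conversely, if both conditions hold, every inequality in the chain is tight.

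The main point needing care is Step 1: recognising the identity $x^2=\frac12(\rp\lambda^2+|\lambda|^2)$. This identity is exactly what turns $m$ into the average $(m+c_2)/2$. Everything else is routine, once Perron--Frobenius has placed $\rho(D)$ as a real eigenvalue equal to its own real part.
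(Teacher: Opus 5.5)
Your proposal is correct and follows the paper's proof essentially step for step. Summing your identity $x^2=\tfrac12\bigl(\rp(\lambda^2)+|\lambda|^2\bigr)$ over the spectrum is exactly the paper's move of adding $\sum_i x_i^2-\sum_i y_i^2=M_2$ to Schur's bound $\sum_i x_i^2+\sum_i y_i^2\le\|A_\alpha\|_F^2$; after that you use the same Perron--Frobenius isolation of $x_1=\rho(D)$, the same Cauchy--Schwarz step on the remaining $|x_i|$, and the same equality analysis (your check that the radicand is nonnegative is a detail the paper only makes explicit in the subsequent corollary).
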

\begin{proof}[Proof of the inequality.]
  By Lemma~\ref{lem:m2}, we have
  \begin{equation}
    \sum_{i=1}^n\lambda_i^2
    =\sum_{i=1}^n (x_i+\iu y_i)^2
    =\left(\sum_{i=1}^n x_i^2-\sum_{i=1}^n y_i^2\right)+2\iu\sum_{i=1}^n x_iy_i
    =\alpha^2 Z+(1-\alpha)^2 c_2+0\iu,
  \end{equation}
  which gives
  \begin{equation}\label{eq:square-sum}
    \sum_{i=1}^n x_iy_i=0
    \qquad\text{and}\qquad
    \sum_{i=1}^n x_i^2-\sum_{i=1}^n y_i^2=\alpha^2 Z+(1-\alpha)^2 c_2.
  \end{equation}
  
  On the other hand, by Schur's Inequality, we have
  \begin{equation}\label{eq:square-diff}
    \sum_{i=1}^n x_i^2+\sum_{i=1}^n y_i^2=\sum_{i=1}^n|\lambda_i|^2
    \le\|A_\alpha\|_F^2
    =\alpha^2 Z+(1-\alpha)^2 m.
  \end{equation}
  Combining~\labelcref{eq:square-sum,eq:square-diff}, we have
  \begin{equation}\label{eq:T-bound}
    \sum_{i=1}^n x_i^2\le\alpha^2 Z+(1-\alpha)^2\frac{m+c_2}{2}.
  \end{equation}

  By Perron--Frobenius Theorem,
  $\rho(D)$ is a real eigenvalue of $A_\alpha$.
  Without loss of generality, write $x_1=\rho(D)$.
  Finally, by Cauchy--Schwarz Inequality, we have
  \begin{equation}\label{eq:k-m-bound-eq2}
    \sum_{i=2}^n|x_i|=\sqrt{\left(\sum_{i=2}^n|x_i|\right)^2}
    \le\sqrt{(n-1)\sum_{i=2}^n x_i^2}
    \le\sqrt{(n-1)\left(\alpha^2 Z+(1-\alpha)^2\frac{m+c_2}{2}-\rho(D)^2\right)}.
  \end{equation}
  Therefore
  \begin{equation}
    E(A_\alpha)=\sum_{i=1}^n|x_i|=\rho(D)+\sum_{i=2}^n|x_i|
    \le\rho+\sqrt{(n-1)\left(\alpha^2 Z+(1-\alpha)^2\frac{m+c_2}{2}-\rho(D)^2\right)},
  \end{equation}
  completing the proof.
\end{proof}
\begin{proof}[Proof of the equality.]
  Equality in the first inequality in~\eqref{eq:k-m-bound-eq2}
  holds if and only if the equality in Cauchy--Schwarz inequality holds, i.e.,
  \begin{equation}
    \left(\sum_{i=2}^n |x_i|\right)^2=(n-1)\sum_{i=2}^n x_i^2,
  \end{equation}
  which holds if and only if $|x_2|=|x_3|=\dots=|x_n|$.
  Equality in the second inequality in~\eqref{eq:k-m-bound-eq2}
  holds if and only if the equality in Schur's inequality holds, i.e.,
  \begin{equation}
    \sum_{i=1}^n|\lambda_i|^2=\|A_\alpha\|_F^2,
  \end{equation}
  which is equivalent to $A_\alpha$ being a normal matrix.
  This completes the proof.
\end{proof}

\begin{corollary}\label{cor:km-bound-no-rho}
  By Theorem~\ref{thm:k-m-bound},
  we can further obtain an upper bound for $n>1$
  that is irrelevant to $\rho(D)$,
  \begin{equation}
    E(D)\le\sqrt{n\left(\alpha^2 Z+(1-\alpha)^2\frac{m+c_2}{2}\right)}.
  \end{equation}
  Equality holds if and only if $A_\alpha(D)$ is normal and
  $|x_1|=|x_2|=\cdots=|x_n|$. In particular:
  (i) if $\alpha=0$, equality holds if and only if $D$ is empty or
  $D\cong\frac{n}2\overleftrightarrow{K_2}$;
  (ii) if $0<\alpha<1$, equality holds if and only if
  $D$ is empty;
  (iii) if $\alpha=1$, equality holds if and only if
  $D$ is out-regular.
\end{corollary}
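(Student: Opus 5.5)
The inequality itself does not need a new estimate. The plan is to apply the Cauchy--Schwarz inequality directly to all $n$ real parts instead of isolating $x_1$. Then \eqref{eq:T-bound} in the proof of Theorem~\ref{thm:k-m-bound} gives
\begin{equation*}
  E(D)=\sum_{i=1}^n|x_i|\le\sqrt{n\sum_{i=1}^n x_i^2}\le\sqrt{n\left(\alpha^2 Z+(1-\alpha)^2\frac{m+c_2}{2}\right)}.
\end{equation*}
One can also reach the same bound from Theorem~\ref{thm:k-m-bound}: maximize $f(\rho)=\rho+\sqrt{(n-1)(T-\rho^2)}$ over $\rho$, where $T=\alpha^2 Z+(1-\alpha)^2\frac{m+c_2}{2}$. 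The maximum is attained at $\rho=\sqrt{T/n}$ and equals $\sqrt{nT}$. The direct route is cleaner for the equality analysis. Equality in the Cauchy--Schwarz step holds if and only if $|x_1|=\dots=|x_n|$. Equality in \eqref{eq:T-bound} holds exactly when equality holds in Schur's inequality \eqref{eq:square-diff}, i.e.\ when $A_\alpha$ is normal. This gives the general characterization.

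For the particular cases, let $r$ denote the common value of $|x_i|$. By Theorem~\ref{thm:p-f-thm} we may take $x_1=\rho(D)$, so $r=\rho$. Every eigenvalue satisfies $|\lambda_i|\le\rho$ and $|\rp\lambda_i|=\rho$, which forces $\ip\lambda_i=0$ and $\lambda_i\in\{\rho,-\rho\}$. Thus $A_\alpha$ is a real normal matrix with real spectrum, hence symmetric, so $D$ is a symmetric digraph.
\begin{itemize}
  \item \textbf{Case $\alpha=0$.} If $\rho=0$, then $A$ is symmetric with zero spectrum, so $A=0$ and $D$ is empty. If $\rho>0$, Lemma~\ref{lem:spectrum-k2} shows that every strong component is a digon. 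Since $D$ is symmetric, any arc between distinct digons would come with its reverse arc. That would merge the two digons into one strong component, so no such arcs exist. Hence $D\cong\frac n2\overleftrightarrow{K_2}$.
  \item \textbf{Case $0<\alpha<1$.} The spectrum lies in $\{\rho,-\rho\}$, so the sink strong component argument from the proof of Theorem~\ref{thm:s-r-lower-bound-1} applies verbatim. It rules out $\rho>0$. Hence $\rho=0$, so $\alpha m=\tr A_\alpha=0$ and $D$ is empty.
  \item \textbf{Case $\alpha=1$.} Here $A_1=\Deg$ is diagonal, hence normal, with nonnegative eigenvalues $d^+(v)$. Equal absolute real parts therefore means out-regular.
\end{itemize}
The converse directions are direct checks. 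In each case the listed digraph has a normal $A_\alpha$ whose eigenvalues have real parts of equal absolute value.

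The main obstacle is the reduction to a real spectrum $\{\pm\rho\}$ and to symmetry of $A_\alpha$. Once this is established, the case analysis reuses Lemma~\ref{lem:spectrum-k2} and the primitivity argument of Theorem~\ref{thm:s-r-lower-bound-1} almost without change. The one remaining care point is excluding arcs between digons when $\alpha=0$, which symmetry handles.
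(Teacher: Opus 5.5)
Your proof is correct. For the inequality and the general equality condition it takes a more direct route than the paper.

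The paper stays inside Theorem~\ref{thm:k-m-bound}. It bounds $E(D)\le f(\rho(D))$ with $f(t)=t+\sqrt{(n-1)(T-t^2)}$ and maximizes $f$ on $[0,\sqrt T]$ at $t_0=\sqrt{T/n}$. For equality it then has to combine $\rho(D)=t_0$, $|x_2|=\cdots=|x_n|$ and $\sum_{i\ge 2}x_i^2=T-\rho(D)^2$ in order to compute $|x_i|^2=T/n=\rho(D)^2$. You instead apply Cauchy--Schwarz to all $n$ real parts and then invoke \eqref{eq:T-bound}. This way $|x_1|=\cdots=|x_n|$ and normality come out at once as the two equality conditions.

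The paper's route does give something yours does not: the chain $E(D)\le f(\rho(D))\le\sqrt{nT}$. That chain shows the bound of Theorem~\ref{thm:k-m-bound} is never worse than this one, which is consistent with B1 beating B2 in Table~\ref{tbl:nc-2}. You mention the maximization only as an alternative, so you do not record that comparison.

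For the special cases your version is more explicit than the paper's. The paper reaches the spectral condition only by pointing to the extremal digraphs of Theorem~\ref{thm:s-r-lower-bound-1}; you derive it directly from $|x_i|=\rho$ and $|\lambda_i|\le\rho$, which force $\lambda_i\in\{\rho,-\rho\}$. When $\alpha=0$ and $\rho=0$, you use symmetry to show $A=0$ outright. This fills in the paper's implicit step that a symmetric DAG is empty.

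One wording fix is needed. The inference ``$A_\alpha$ is symmetric, so $D$ is symmetric'' is valid only for $\alpha<1$. For $\alpha=1$, $A_1=\Deg(D)$ is diagonal for every $D$; for example, $\overrightarrow{C_3}$ is out-regular but not symmetric. You never use symmetry in the $\alpha=1$ case, so nothing breaks, but the claim should be stated only for $\alpha<1$.
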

\begin{proof}[Proof of the inequality]
  For simplicity, write
  \begin{equation}
    T\coloneqq \alpha^2 Z+(1-\alpha)^2\frac{m+c_2}{2}.
  \end{equation}
  By Theorem~\ref{thm:k-m-bound}, we have
  \begin{equation}
    E(D)\le f(\rho(D)),
    \qquad
    f(t)\coloneqq t+\sqrt{(n-1)\left(T-t^2\right)}.
  \end{equation}
  Notice that~\eqref{eq:T-bound} implies
  $\rho(D)^2\le T\implies\rho(D)\in[0,\sqrt{T}]$
  and $f$ is well-defined on this interval.

  For $t\in(0,\sqrt{T})$, we have
  \begin{equation}
    f'(t)=1-\frac{\sqrt{n-1}\,t}{\sqrt{T-t^2}},
    \qquad
    f''(t)=-\frac{T\sqrt{n-1}}{(T-t^2)^{3/2}}<0,
  \end{equation}
  and $f'(t)=0$ if and only if
  $\sqrt{n-1}\,t=\sqrt{T-t^2}\Longleftrightarrow t=\sqrt{\frac{T}{n}}$.
  Therefore $f$ attains its maximum on $[0,\sqrt{T}]$ at
  $t_0=\sqrt{\frac{T}{n}}$,
  and hence $E(D)\le f(\rho(D))\le f(t_0)$.
  Finally,
  \begin{equation}
    f(t_0)=\sqrt{\frac{T}{n}}+\sqrt{(n-1)\left(T-\frac{T}{n}\right)}
    =\sqrt{\frac{T}{n}}+(n-1)\sqrt{\frac{T}{n}}
    =\sqrt{nT},
  \end{equation}
  completing the proof.
\end{proof}
\begin{proof}[Proof of the equality]
  Both inequalities in
  \begin{equation}
    E(D)\le f(\rho(D))\le f(t_0)
  \end{equation}
  must be equalities for the equality to hold.

  Equality in the first inequality
  is exactly the equality case of Theorem~\ref{thm:k-m-bound}.
  Hence $A_\alpha(D)$ is normal and $|x_2|=|x_3|=\dots=|x_n|$.

  Equality in the second inequality implies $\rho(D)=t_0=\sqrt{\frac{T}{n}}$.
  Therefore
  \begin{equation}
    \rho(D)^2=\frac{T}{n}.
  \end{equation}
  On the other hand, by~\eqref{eq:T-bound}, we have
  \begin{equation}
    \sum_{i=2}^n x_i^2=T-\rho(D)^2.
  \end{equation}
  Since $|x_2|=|x_3|\dots=|x_n|$, it follows that
  \begin{equation}
    |x_2|^2=|x_3|^2=\dots=|x_n|^2
    =\frac{T-\rho(D)^2}{n-1}
    =\frac{T-\frac{T}{n}}{n-1}
    =\frac{T}{n}
    =\rho(D)^2,
  \end{equation}
  which gives
  \begin{equation}
    \rho(D)=|x_1|=|x_2|=|x_3|=\dots=|x_n|.
  \end{equation}
  
  This corresponds to the extremal digraph described in
  Theorem~\ref{thm:s-r-lower-bound-1}
  with an additional requirement that $A_\alpha(D)$ is normal.
  This extra condition does change the required graph structure,
  because since $A_\alpha(D)$ is a real normal matrix
  with only real eigenvalues, it must be symmetric.
  Consequently, the digraph $D$ must also be symmetric.
  Therefore, in the nontrivial equality case for $\alpha=0$,
  there can no longer be arcs between distinct digons.
  In other words,
  the digraph satisfying the equality when $\alpha=0$
  must be a disjoint union of copies of
  $\overleftrightarrow{K_2}$.
\end{proof}

In the following two corollaries,
we demonstrate that the two bounds derived above
reduce to several classical bounds,
therefore they can be seen as a generalization
of these existing results.

\begin{corollary}\label{cor:km-bound-undirected}
  Consider a symmetric digraph $G$ where each pair of adjacent vertices
  are connected by a digon.
  Then $G$ is equivalent to an undirected graph
  with $n'=n$ vertices and $m'=m/2=c_2/2$ edges,
  and $\Deg(G),A(G)$ reduces to
  its degree matrix and adjacency matrix, respectively.
  Then, by Theorem~\ref{thm:k-m-bound}, we have
  \begin{equation}
    E(A_\alpha)\le\rho(G)+\sqrt{(n'-1)\left(\alpha^2 Z+2(1-\alpha)^2m'-\rho(G)^2\right)}.
  \end{equation}
  This is a Koolen-Moulton type bound for the $A_\alpha$-matrix
  of an undirected graph.

  Similarly, by Corollary~\ref{cor:km-bound-no-rho}, we have
  \begin{equation}
    E(A_\alpha)\le\sqrt{n'\left(\alpha^2 Z+2(1-\alpha)^2 m'\right)}.
  \end{equation}
\end{corollary}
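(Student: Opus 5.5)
The plan is to specialize Theorem~\ref{thm:k-m-bound} and Corollary~\ref{cor:km-bound-no-rho} to the symmetric digraph $G$. Almost everything reduces to rewriting the parameters $m$, $c_2$, $Z$ and the matrices in undirected terms.

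First I will record the dictionary between the symmetric digraph $G$ and its underlying undirected graph. Every edge $\{u,v\}$ of the undirected graph corresponds to the digon $u\to v$, $v\to u$, so $m=2m'$. Every arc lies in a digon, so $A(G)_{uv}A(G)_{vu}=A(G)_{uv}$ for all $u,v$. Hence
\begin{equation}
  c_2=\tr(A(G)^2)=\sum_{u,v}A(G)_{uv}=m=2m'.
\end{equation}
Moreover, $A(G)$ is symmetric and equals the undirected adjacency matrix. Also $d^+(v)=d^-(v)=d(v)$, so $\Deg(G)$ is the usual degree matrix and $Z=\sum_v d(v)^2$ is the classical first Zagreb index. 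Consequently $A_\alpha(G)$ coincides with Nikiforov's $A_\alpha$-matrix of the undirected graph. Its spectral radius $\rho(G)$ and low energy are the same quantities in both readings, with $|\rp(\lambda_i)|=|\lambda_i|$ since the spectrum is real.

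Second, I will substitute into Theorem~\ref{thm:k-m-bound}. Using
\begin{equation}
  \frac{m+c_2}{2}=\frac{2m'+2m'}{2}=2m'
\end{equation}
and $n=n'$ yields the first displayed inequality directly. Likewise, Corollary~\ref{cor:km-bound-no-rho} with the same substitution, valid for $n'>1$, gives the second bound. If one wants equality cases, they transfer verbatim. Since $A_\alpha(G)$ is symmetric, normality is automatic, so equality in the first bound reduces to $|\lambda_2|=\cdots=|\lambda_n|$.

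The only step requiring care is the identity $c_2=m$, i.e.\ checking that the hypothesis ``each pair of adjacent vertices is connected by a digon'' really makes every arc contribute to a closed 2-walk. The rest is substitution, so I expect no genuine obstacle.
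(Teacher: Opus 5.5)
Your proposal is correct and matches the paper's argument: the corollary follows by substituting $n=n'$ and $m=c_2=2m'$, so that $\frac{m+c_2}{2}=2m'$, into Theorem~\ref{thm:k-m-bound} and Corollary~\ref{cor:km-bound-no-rho}. Your explicit check that $c_2=m$, via $A(G)_{uv}A(G)_{vu}=A(G)_{uv}$, fills in a step the paper asserts without justification.
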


\begin{corollary}
  By setting $\alpha=0$ in Corollary~\ref{cor:km-bound-undirected},
  we have
  \begin{equation}
    E(A)\le\rho(G)+\sqrt{(n'-1)\left(2m'-\rho(G)^2\right)}
  \end{equation}
  and
  \begin{equation}
    E(A)\le\sqrt{2n'm'},
  \end{equation}
  respectively.
  For the former bound, notice that function
  $f(t)=t+\sqrt{(n'-1)\left(2m'-t^2\right)}$
  reaches its maximal at $t=\sqrt{\frac{2m'}{n'}}$
  and decreases on $\left[\sqrt{\frac{2m'}{n'}},+\infty\right)$.
  Therefore, for any undirected graph $G$ such that
  \begin{equation}
    \frac{2m'}{n'}\ge\sqrt{\frac{2m'}{n'}}
    \Longleftrightarrow
    \frac{2m'}{n'}\ge 1,
  \end{equation}
  by the inequality $\rho(G)\ge\frac{2m'}{n'}$, we obtain
  \begin{equation}
    E(A(G))\le\frac{2m'}{n'}+\sqrt{(n'-1)\left(2m'-\frac{4m'^2}{n'^2}\right)}.
  \end{equation}
  which is exactly the classical Koolen-Moulton bound for the adjacency matrix
  of an undirected graph,
  given by Koolen and Moulton in Theorem~1 of~\cite{Koolen2001}.

  The latter bound is also a well-known classical bound
  for undirected graphs, first introduced by McClelland~\cite{McClelland1971}.
\end{corollary}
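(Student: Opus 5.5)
The corollary has two parts. The first specializes Corollary~\ref{cor:km-bound-undirected} to $\alpha=0$, which is pure substitution. The second recovers the classical Koolen--Moulton bound, which needs a monotonicity argument together with a known lower bound for $\rho(G)$.

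\textbf{Substitution.} Set $\alpha=0$. Then $A_\alpha(G)=A(G)$, the term $\alpha^2 Z$ vanishes and $(1-\alpha)^2=1$. Recall that for a symmetric digraph every arc lies in a digon, so $m=c_2=2m'$ and hence $\frac{m+c_2}{2}=2m'$. Inserting this into the two bounds of Corollary~\ref{cor:km-bound-undirected} gives $E(A)\le\rho(G)+\sqrt{(n'-1)(2m'-\rho(G)^2)}$ and $E(A)\le\sqrt{2n'm'}$ immediately. Since $G$ is symmetric, $A(G)$ is real symmetric, its eigenvalues are real, and the low energy coincides with Gutman's energy. So these are genuinely statements about undirected graph energy. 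The second bound is McClelland's inequality.

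\textbf{Monotonicity of $f$.} Put $T=2m'$ and $f(t)=t+\sqrt{(n'-1)(T-t^2)}$. The computation of $f'$ and $f''$ from the proof of Corollary~\ref{cor:km-bound-no-rho} applies verbatim. Hence $f$ is concave on $[0,\sqrt{T}]$, has its unique critical point at $t_0=\sqrt{T/n'}=\sqrt{2m'/n'}$, and is decreasing on $[t_0,\sqrt{T}]$. We also know $\rho(G)\le\sqrt{T}$ by \eqref{eq:T-bound}, so $\rho(G)$ lies in the domain of $f$.

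\textbf{Lower bound on $\rho(G)$.} I will use the classical bound $\rho(G)\ge 2m'/n'$, the average degree. It follows from the Rayleigh quotient with the all-ones vector, $\rho(G)\ge \mathbf{1}^\tp A\mathbf{1}/n'=2m'/n'$; I would either cite this or include the one-line argument.

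\textbf{Combining.} Suppose $2m'/n'\ge 1$. Then $2m'/n'\ge\sqrt{2m'/n'}=t_0$. We also need $2m'/n'\le\sqrt{T}$, that is $4m'^2/n'^2\le 2m'$, equivalently $2m'\le n'^2$, which holds since $2m'\le n'(n'-1)$. So $t_0\le 2m'/n'\le\rho(G)\le\sqrt{T}$. Because $f$ is decreasing on this interval, $f(\rho(G))\le f(2m'/n')$. Chaining with the first bound gives
\begin{equation*}
E(A(G))\le\frac{2m'}{n'}+\sqrt{(n'-1)\left(2m'-\frac{4m'^2}{n'^2}\right)},
\end{equation*}
which is the Koolen--Moulton bound.

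The only delicate point is checking that all the quantities lie in the interval where $f$ is defined and decreasing; that check is exactly the hypothesis $2m'\ge n'$ together with $2m'\le n'^2$.
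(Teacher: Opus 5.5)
Your proposal is correct and follows the same route as the paper. You substitute $\alpha=0$ with $m=c_2=2m'$, use the unique critical point $t_0=\sqrt{2m'/n'}$ of the concave function $f$, and combine this with $\rho(G)\ge 2m'/n'$ under the hypothesis $2m'\ge n'$. Your treatment of the domain is more careful than the paper's claim that $f$ decreases on $[t_0,+\infty)$. The separate check $2m'\le n'^2$ is redundant, though, since $2m'/n'\le\rho(G)\le\sqrt{2m'}$ already puts $2m'/n'$ inside the interval where $f$ is defined and decreasing.
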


Table~\ref{tbl:nc-2} presents a numerical comparison
of three upper bounds for the $A_\alpha$-matrix low energy $E(D)$
on randomly generated $n$-vertex $k$-regular digraphs,
where $k=6,7,8,9,10$
and the corresponding values of $n$ are listed in the table.
For each $(n,k)$, we generate a large sample ($N=1,000$)
of random $k$-regular digraphs, compute $E(D)$ for each sample
and then evaluate the relative error $\frac{U}{E(D)}-1$,
where $U$ is one of the following bounds:
(i) the bound in Theorem~\ref{thm:k-m-bound} (denoted by B1);
(ii) the bound in Corollary~\ref{cor:km-bound-no-rho} (denoted by B2);
and (iii) the existing bound for regular digraphs
(Corollary~3.2 of~\cite{Pirzada2025}, denoted by B3).
Each table entry is $(\text{mean}\pm\text{standard deviation})$
of the above relative error.

\begin{table}[htbp]
  \begin{tabular}{lccccc}
    \toprule
     & $k=6$ & $k=7$ & $k=8$ & $k=9$ & $k=10$ \\
    & $(n=60)$ & $(n=70)$ & $(n=80)$ & $(n=90)$ & $(n=100)$ \\
    \midrule
    B1 & $0.1922\pm 0.0042$ & $0.1662\pm 0.0033$ & $0.1463\pm 0.0024$ & $0.1308\pm 0.0020$ & $0.1181\pm 0.0017$ \\
    B2 & $0.2243\pm 0.0040$ & $0.1950\pm 0.0031$ & $0.1725\pm 0.0023$ & $0.1546\pm 0.0019$ & $0.1400\pm 0.0016$ \\
    B3 & $0.7064\pm 0.0069$ & $0.6542\pm 0.0057$ & $0.6121\pm 0.0045$ & $0.5772\pm 0.0038$ & $0.5474\pm 0.0033$ \\
    \bottomrule
  \end{tabular}
  \begin{tabular}{lccccc}
    \toprule
     & $k=6$ & $k=7$ & $k=8$ & $k=9$ & $k=10$ \\
    & $(n=60)$ & $(n=70)$ & $(n=80)$ & $(n=90)$ & $(n=100)$ \\
    \midrule
    B1 & $0.0069 \pm 0.0002$ & $0.0059 \pm 0.0001$ & $0.0052 \pm 0.0001$ & $0.0046 \pm 0.0001$ & $0.0041 \pm 0.0001$ \\
    B2 & $0.0084 \pm 0.0002$ & $0.0072 \pm 0.0001$ & $0.0063 \pm 0.0001$ & $0.0056 \pm 0.0001$ & $0.0050 \pm 0.0001$ \\
    B3 & $0.1298 \pm 0.0013$ & $0.1203 \pm 0.0010$ & $0.1125 \pm 0.0009$ & $0.1060 \pm 0.0007$ & $0.1006 \pm 0.0006$ \\
    \bottomrule
  \end{tabular}
  \caption{Numerical comparison of three low energy bounds.
  Top: $\alpha=0.3$; Bottom: $\alpha=0.7$.
  Smaller is better.}\label{tbl:nc-2}
\end{table}

The results indicate that B1 that we derived
is consistently the sharpest bound
for all tested values of $(n,k)$ and for both choices of $\alpha$
shown in the two parts of the table.
The bound B2 is slightly weaker than B1,
but it is still noticeably sharper than B3.

\section{Extremal Graph Families for the Equality Case of the Low Energy Bound}\label{sec:5}
This section serves as a complement
for the equality conditions in the low energy upper bounds
established in Theorem~\ref{thm:k-m-bound}
and Corollary~\ref{cor:km-bound-no-rho}.
Recall that the equality occurs if and only if
$A_\alpha(D)$ is a normal matrix and $|x_2|=|x_3|=\cdots=|x_n|$.
In this section, we give several families of digraphs
for which these two spectral properties hold.

We start with converting the algebraic condition that
the $A_\alpha$-matrix is normal
to a topological condition on the digraph.

\begin{theorem}\label{thm:normal-cond}
  Let $D=(V,E)$ be a digraph of order $n$ and let $\alpha\in[0,1)$.
  For distinct vertices $u,v\in V$, define
  \begin{align}
    &\delta_{uv}\coloneqq |N^+(u)\cap N^+(v)|-|N^-(u)\cap N^-(v)|, \\
    &\Delta_{uv}\coloneqq d^+(u)-d^+(v),
    \qquad
    \sigma_{uv}\coloneqq A_{uv}-A_{vu}\in\{-1,0,1\}.
  \end{align}
  Then $A_\alpha$ is normal if and only if
  \begin{equation}\label{eq:normal-diag-balance}
    d^+(u)=d^-(u),\quad\forall u\in V
    \qquad\text{and}\qquad
    \delta_{uv}=\frac{\alpha}{1-\alpha}\Delta_{uv}\sigma_{uv},\quad\forall u,v\in V,u\ne v.
  \end{equation}
\end{theorem}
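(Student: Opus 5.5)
Write $A=A(D)$, $\Delta=\Deg(D)$ and $\beta=1-\alpha>0$, so that $A_\alpha=\alpha\Delta+\beta A$. The plan is to expand the commutator $A_\alpha A_\alpha^\tp-A_\alpha^\tp A_\alpha$ entrywise and read off both conditions. Since $\Delta$ is diagonal and symmetric,
\begin{equation*}
A_\alpha A_\alpha^\tp-A_\alpha^\tp A_\alpha
=\beta^2\left(AA^\tp-A^\tp A\right)+\alpha\beta\left(\Delta A^\tp+A\Delta-A^\tp\Delta-\Delta A\right),
\end{equation*}
because the $\alpha^2\Delta^2$ terms cancel. Normality means this matrix vanishes, and I would check its diagonal and off-diagonal entries separately.

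For the diagonal entry at $u$, note $(AA^\tp)_{uu}=d^+(u)$ and $(A^\tp A)_{uu}=d^-(u)$. In the mixed term, each of the four products has diagonal entry $d^+(u)A_{uu}$ or $A_{uu}d^+(u)$, and these vanish since $A_{uu}=0$ (no loops). Hence the $(u,u)$ entry equals $\beta^2(d^+(u)-d^-(u))$. As $\beta\neq0$, it vanishes for all $u$ exactly when $D$ is Eulerian-balanced, i.e.\ $d^+(u)=d^-(u)$ for every $u$.

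For an off-diagonal entry with $u\neq v$, I would compute
\begin{equation*}
(AA^\tp)_{uv}=|N^+(u)\cap N^+(v)|,\qquad (A^\tp A)_{uv}=|N^-(u)\cap N^-(v)|,
\end{equation*}
so the first part contributes $\beta^2\delta_{uv}$. For the mixed part, use $(\Delta A^\tp)_{uv}=d^+(u)A_{vu}$, $(A\Delta)_{uv}=A_{uv}d^+(v)$, $(A^\tp\Delta)_{uv}=A_{vu}d^+(v)$ and $(\Delta A)_{uv}=d^+(u)A_{uv}$. These sum to
\begin{equation*}
d^+(u)(A_{vu}-A_{uv})+d^+(v)(A_{uv}-A_{vu})=-\Delta_{uv}\sigma_{uv}.
\end{equation*}
So the $(u,v)$ entry equals $\beta^2\delta_{uv}-\alpha\beta\,\Delta_{uv}\sigma_{uv}$. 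Dividing by $\beta^2$, which is allowed because $\alpha<1$, this vanishes if and only if $\delta_{uv}=\frac{\alpha}{1-\alpha}\Delta_{uv}\sigma_{uv}$.

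Combining the diagonal and off-diagonal conditions gives both directions of the equivalence at once, since every step is an equality rather than an inequality. The only delicate point is keeping the signs and index orders straight in the four mixed products; I would verify these against a single arc $u\to v$ to confirm the sign of $\sigma_{uv}$. I do not expect a genuine obstacle beyond this bookkeeping. The hypothesis $\alpha<1$ is used only to divide by $1-\alpha$.
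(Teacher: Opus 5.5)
Your proposal is correct and matches the paper's proof step for step. Both expand the commutator as $\beta^2(AA^\tp-A^\tp A)+\alpha\beta(\Delta A^\tp+A\Delta-\Delta A-A^\tp\Delta)$, read off $\beta^2(d^+(u)-d^-(u))$ on the diagonal and $\beta^2\delta_{uv}-\alpha\beta\,\Delta_{uv}\sigma_{uv}$ off the diagonal (your signs in the four mixed products agree with the paper's), and then use $\beta=1-\alpha\neq 0$ to conclude.
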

\begin{proof}
  Since $A_\alpha$ is real, $A_\alpha$ is normal if and only if
  $A_\alpha A_\alpha^\tp=A_\alpha^\tp A_\alpha$.
  For simplicity, write $\Delta=\Deg(D)$ and $\beta=1-\alpha$.
  Then $A_\alpha=\alpha\Delta+\beta A$, and hence
  \begin{align}\label{eq:normal-expand}
    A_\alpha A_\alpha^\tp-A_\alpha^\tp A_\alpha
    &=(\alpha\Delta+\beta A)(\alpha\Delta+\beta A^\tp)
    -(\alpha\Delta+\beta A^\tp)(\alpha\Delta+\beta A) \notag\\
    &=\beta^2(AA^\tp-A^\tp A)
    +\alpha\beta(\Delta A^\tp+A\Delta-\Delta A-A^\tp\Delta).
  \end{align}
  Therefore $A_\alpha$ is normal if and only if the matrix on the right-hand side
  of~\eqref{eq:normal-expand} is zero.

  First, we consider diagonal entries.
  For any $u\in V$, we have
  \begin{equation}
    (AA^\tp)_{uu}=d^+(u),
    \qquad
    (A^\tp A)_{uu}=d^-(u)
  \end{equation}
  and
  \begin{equation}
    A_{uu}=0\implies
    (\Delta A^\tp)_{uu}=(A\Delta)_{uu}=(\Delta A)_{uu}=(A^\tp\Delta)_{uu}=0.
  \end{equation}
  Thus the $(u,u)$-entry of~\eqref{eq:normal-expand} equals
  $(1-\alpha)^2\bigl(d^+(u)-d^-(u))$.
  Since $\alpha\ne 1$, this vanishes for every $u$ if and only if
  \begin{equation}
    d^+(u)=d^-(u),\quad\forall u\in V.
  \end{equation}

  Next consider off-diagonal entries $u\ne v$. We have
  \begin{equation}
    (AA^\tp)_{uv}=\sum_{w\in V}A_{uw}A_{vw},
    \qquad
    (A^\tp A)_{uv}=\sum_{w\in V}A_{wu}A_{wv},
  \end{equation}
  where the first sum counts the common out-neighbors of $u$ and $v$,
  while the second counts their common in-neighbors.
  Hence
  \begin{equation}
    (AA^\tp)_{uv}=|N^+(u)\cap N^+(v)|,
    \qquad
    (A^\tp A)_{uv}=|N^-(u)\cap N^-(v)|.
  \end{equation}
  Also, since $\Delta$ is diagonal, we have
  \begin{equation}
    (\Delta A^\tp)_{uv}=d^+(u)A_{vu},\quad
    (A\Delta)_{uv}=A_{uv}d^+(v),\quad
    (\Delta A)_{uv}=d^+(u)A_{uv},\quad
    (A^\tp\Delta)_{uv}=A_{vu}d^+(v).
  \end{equation}
  Hence
  \begin{equation}
    (\Delta A^\tp+A\Delta-\Delta A-A^\tp\Delta)_{uv}
    =d^+(u)A_{vu}+A_{uv}d^+(v)-d^+(u)A_{uv}-A_{vu}d^+(v)
    =(d^+(u)-d^+(v))(A_{vu}-A_{uv}).
  \end{equation}
  Substituting this into~\eqref{eq:normal-expand},
  the $(u,v)$-entry becomes
  $(1-\alpha)^2\delta_{uv}-\alpha(1-\alpha)\Delta_{uv}\sigma_{uv}$.
  This vanishes if and only if
  \begin{equation}
    \delta_{uv}=\frac{\alpha}{1-\alpha}\Delta_{uv}\sigma_{uv},\quad\forall u,v\in V,u\ne v.
  \end{equation}
  This completes the proof.
\end{proof}

\begin{corollary}
  The condition
  \begin{equation}
    \delta_{uv}=\frac{\alpha}{1-\alpha}\Delta_{uv}\sigma_{uv},\quad\forall u,v\in V,u\ne v
  \end{equation}
  in Theorem~\ref{thm:normal-cond} can be further simplified.
  If $\frac{\alpha}{1-\alpha}=\frac{p}{q}\in\mathbb{Q},\,p,q\in\mathbb{Z}_{\ge 0},\,\gcd(p,q)=1,$
  then it simplifies to
  \begin{equation}
    \begin{cases}
      \exists k_{uv}\in\mathbb{Z}:\,q\sigma_{uv}k_{uv}=\Delta_{uv},\,pk_{uv}=\delta_{uv},
      &\forall u,v\in V,u\ne v,\sigma_{uv}\ne 0; \\
      \delta_{uv}=0, &\forall u,v\in V,u\ne v,\sigma_{uv}=0.
    \end{cases}
  \end{equation}
  If $\alpha\notin\mathbb{Q}$, then it simplifies to
  \begin{equation}
    \delta_{uv}=0,\,\Delta_{uv}\sigma_{uv}=0,\qquad\forall u,v\in V,u\ne v.
  \end{equation}
\end{corollary}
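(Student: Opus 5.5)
The plan is to work directly with the integer identity behind the condition. Write $r=\frac{\alpha}{1-\alpha}$. This is well defined and nonnegative because $\alpha\in[0,1)$, as in Theorem~\ref{thm:normal-cond}. The quantities $\delta_{uv}$, $\Delta_{uv}$ and $\sigma_{uv}$ are all integers, with $\sigma_{uv}\in\{-1,0,1\}$. The condition is $\delta_{uv}=r\,\Delta_{uv}\sigma_{uv}$ for each ordered pair $u\ne v$, and I will simplify it pair by pair, splitting on whether $r$ is rational and on whether $\sigma_{uv}$ vanishes.

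\textbf{Rational case.} Let $r=p/q$ with $\gcd(p,q)=1$. Since $r$ is finite we have $q\ge 1$; when $p=0$ (i.e.\ $\alpha=0$) the coprimality forces $q=1$. The condition becomes $q\,\delta_{uv}=p\,\Delta_{uv}\sigma_{uv}$.

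If $\sigma_{uv}=0$, the right-hand side vanishes, and $q\ne 0$ gives $\delta_{uv}=0$.

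If $\sigma_{uv}=\pm1$, then $q$ divides $p\,\Delta_{uv}\sigma_{uv}$, so by coprimality $q$ divides $\Delta_{uv}\sigma_{uv}$. Set $k_{uv}=\Delta_{uv}\sigma_{uv}/q\in\mathbb{Z}$. Since $\sigma_{uv}^2=1$, this gives $\Delta_{uv}=q\sigma_{uv}k_{uv}$, and cancelling $q$ in $q\delta_{uv}=pqk_{uv}$ gives $\delta_{uv}=pk_{uv}$.

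Conversely, if such a $k_{uv}$ exists, then $p\Delta_{uv}\sigma_{uv}=pq\sigma_{uv}^2k_{uv}=q\,pk_{uv}=q\delta_{uv}$, so the original identity holds. When $p=0$ this reads $\delta_{uv}=0$ with $k_{uv}=\Delta_{uv}\sigma_{uv}$ unconstrained, which is consistent with the stated form.

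\textbf{Irrational case.} First note that $\alpha\in\mathbb{Q}$ if and only if $r\in\mathbb{Q}$, since $\alpha=r/(1+r)$ and $r=\alpha/(1-\alpha)$ are inverse M\"obius maps with rational coefficients. So $\alpha\notin\mathbb{Q}$ means $r\notin\mathbb{Q}$. If $\Delta_{uv}\sigma_{uv}\ne0$, then $r=\delta_{uv}/(\Delta_{uv}\sigma_{uv})$ would be rational, a contradiction. Hence $\Delta_{uv}\sigma_{uv}=0$, and then $\delta_{uv}=0$. The converse is immediate.

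The only delicate point is the divisibility step in the rational case: making sure $q\ge1$ so that cancellation is legitimate, and handling the degenerate value $p=0$. Everything else is bookkeeping.
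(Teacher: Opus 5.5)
Your proof is correct and is the argument the paper leaves implicit when it calls the proof ``straightforward'': in the rational case you clear denominators to get $q\delta_{uv}=p\Delta_{uv}\sigma_{uv}$ and use $\gcd(p,q)=1$, and in the irrational case you note that $\Delta_{uv}\sigma_{uv}\ne0$ would make $\alpha/(1-\alpha)$ rational. You also supply details the paper omits: that $\alpha\in\mathbb{Q}$ if and only if $\alpha/(1-\alpha)\in\mathbb{Q}$, so the two cases are exhaustive, and the degenerate case $p=0$, $q=1$.
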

\begin{proof}
  The proof is straightforward.
\end{proof}

We demonstrate that the regular tournament
is the unique graph among all tournament graphs
that satisfies the condition.

\begin{theorem}
  Let $D$ be a tournament of order $n$ and let $\alpha\in[0,1)$.
  Then $A_\alpha(D)$ is a normal matrix satisfying
  $|x_2|=|x_3|=\dots=|x_n|$ if and only if $D$ is a regular tournament.
\end{theorem}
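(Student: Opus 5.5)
The plan is to show that, for a tournament, normality of $A_\alpha$ alone already forces regularity. For the converse, I will compute the spectrum of $A_\alpha$ for a regular tournament explicitly and read off the real parts. Throughout, write $A=A(D)$ and $J$ for the all-ones matrix. Since $D$ is a tournament, every pair of distinct vertices is joined by exactly one arc. This gives the basic identity $A+A^\tp=J-I$ and also $d^+(u)+d^-(u)=n-1$ for every $u$.

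\emph{Necessity.} Suppose $A_\alpha$ is normal with $\alpha\in[0,1)$. By Theorem~\ref{thm:normal-cond}, $d^+(u)=d^-(u)$ for all $u\in V$. Combined with $d^+(u)+d^-(u)=n-1$, this gives $d^+(u)=d^-(u)=(n-1)/2$ for every $u$. Hence $D$ is a regular tournament, and $n$ is odd. The spectral condition $|x_2|=\dots=|x_n|$ is not even needed for this direction.

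\emph{Sufficiency.} Let $D$ be regular with $k=(n-1)/2$, so that $AJ=JA=kJ$ and $\Deg(D)=kI$. Then $A_\alpha=\alpha kI+(1-\alpha)A$, which is normal exactly when $A$ is normal. Substituting $A^\tp=J-I-A$ gives
\begin{equation}
AA^\tp=kJ-A-A^2=A^\tp A.
\end{equation}
Thus $A$, and hence $A_\alpha$, is normal. (Equivalently, in Theorem~\ref{thm:normal-cond} we have $\Delta_{uv}=0$ and $\delta_{uv}=0$.)

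For the eigenvalues, note that $A$ is real normal, so it has an orthonormal eigenbasis that we may choose to contain $\mathbf 1/\sqrt n$, with eigenvalue $k$. Let $\mathbf x\perp\mathbf 1$ satisfy $A\mathbf x=\lambda\mathbf x$. By normality $A^\tp\mathbf x=\bar\lambda\mathbf x$. Then
\begin{equation}
(\lambda+\bar\lambda)\mathbf x=(J-I)\mathbf x=-\mathbf x,
\end{equation}
so $\rp\lambda=-\tfrac12$. Therefore the eigenvalues of $A_\alpha$ are $\rho(D)=k$ together with $n-1$ eigenvalues whose real part equals $\alpha k-\tfrac{1-\alpha}{2}$. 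Consequently $|x_2|=\dots=|x_n|$, and the case $n=1$ is trivial.

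The only delicate point is the eigenvector argument, namely that for a normal matrix $\lambda$ and $\bar\lambda$ share eigenvectors with $A$ and $A^\tp$. This is standard, so I expect no real obstacle.
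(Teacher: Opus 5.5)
Your proof is correct and has the same structure as the paper's: necessity via Theorem~\ref{thm:normal-cond} (you helpfully spell out the step $d^+(u)+d^-(u)=n-1$), and the spectral part via $\mathbf x\perp\mathbf 1$, $A^\tp\mathbf x=\bar\lambda\mathbf x$ and $(A+A^\tp)\mathbf x=(J-I)\mathbf x=-\mathbf x$, exactly as in the paper. The real difference is in the normality check: you use the identity $AA^\tp=kJ-A-A^2=A^\tp A$ (from $A^\tp=J-I-A$ and $AJ=JA=kJ$), whereas the paper shows $|N^+(u)\cap N^+(v)|=|N^-(u)\cap N^-(v)|$ by a counting argument, so your route is shorter and does not need Theorem~\ref{thm:normal-cond} for sufficiency. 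In addition, your choice of an orthonormal eigenbasis containing $\mathbf 1/\sqrt n$ makes explicit that all $n-1$ remaining eigenvalues, counted with multiplicity, have real part $-\tfrac12$; the paper's ``any eigenvalue other than $k$'' leaves this implicit.
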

\begin{proof}[Proof of Necessity ($\implies$)]
  Suppose $A_\alpha(D)$ is normal.
  By Theorem~\ref{thm:normal-cond}, normality implies
  $d^+(u)=d^-(u),\,\forall u\in V$,
  which is exactly the definition of $D$ being a regular tournament.
\end{proof}
\begin{proof}[Proof of Sufficiency ($\impliedby$)]
  Assume that $D$ is a regular tournament.
  We first show that $A_\alpha(D)$ is normal.
  Put $k=(n-1)/2$, so $d^+(u)=d^-(u)=k$ for all $u\in V$.
  Hence
  \begin{equation}
    d^+(u)=d^-(u),\quad \forall u\in V,
    \qquad
    \Delta_{uv}=0,\quad \forall u\ne v.
  \end{equation}
  Now fix $u\ne v$. Since $D$ is a tournament,
  exactly one of $u\to v$ and $v\to u$ holds.
  Without loss of generality, assume $u\to v$.
  Then since $D$ is a tournament,
  every vertex $w\in V\setminus\{u,v\}$
  belongs to exactly one of the four sets
  \begin{equation}
    N^+(u)\cap N^+(v),\quad
    N^+(u)\cap N^-(v),\quad
    N^-(u)\cap N^+(v),\quad
    N^-(u)\cap N^-(v).
  \end{equation}
  Moreover,
  \begin{equation}
    N^+(u)=\{v\}\cup\bigl(N^+(u)\cap N^+(v)\bigr)\cup\bigl(N^+(u)\cap N^-(v)\bigr),
  \end{equation}
  so
  \begin{equation}
    d^+(u)=1+|N^+(u)\cap N^+(v)|+|N^+(u)\cap N^-(v)|.
  \end{equation}
  Similarly,
  \begin{equation}
    N^-(v)=\{u\}\cup\bigl(N^+(u)\cap N^-(v)\bigr)\cup\bigl(N^-(u)\cap N^-(v)\bigr),
  \end{equation}
  and hence
  \begin{equation}
    d^-(v)=1+|N^+(u)\cap N^-(v)|+|N^-(u)\cap N^-(v)|.
  \end{equation}
  Since $d^+(u)=d^-(v)$, it follows that
  \begin{equation}
    |N^+(u)\cap N^+(v)|=|N^-(u)\cap N^-(v)|.
  \end{equation}
  By Theorem~\ref{thm:normal-cond}, $A_\alpha(D)$ is normal.

  Next we prove that $|x_2|=|x_3|=\dots=|x_n|$.
  Let $A=A(D)=A_\alpha(D)|_{\alpha=0}$,
  which is normal as we just proved.
  Since $D$ is a regular tournament, it holds that
  \begin{equation}
    A\mathbf{1}=k\mathbf{1},\qquad A+A^\tp=J-I,
  \end{equation}
  where $\mathbf{1}$ is the all-$1$ vector and
  $J$ is the all-$1$ matrix.

  Let $\lambda$ be any eigenvalue of $A$ other than $k$, and let
  $\mathbf{x}$ be a corresponding eigenvector.
  Since eigenspaces corresponding to distinct eigenvalues
  of a normal matrix are orthogonal, we have
  $\mathbf{x}\perp\mathbf{1}$.
  Hence $J\mathbf{x}=(\mathbf{1}\mathbf{1}^\tp)\mathbf{x}=\mathbf{1}(\mathbf{1}^\tp\mathbf{x})=\mathbf{0}$
  and
  \begin{equation}
    (A+A^\tp)\mathbf{x}=(J-I)\mathbf{x}=\mathbf{0}-\mathbf{x}=-\mathbf{x}.
  \end{equation}
  Moreover, since $A$ is normal,
  we have
  $A\mathbf{x}=\lambda\mathbf{x}\implies A^\tp\mathbf{x}=\overline{\lambda}\mathbf{x}$.
  Therefore
  \begin{equation}
    (A+A^\tp)\mathbf{x}
    =(\lambda+\overline{\lambda})\mathbf{x}
    =-\mathbf{x}
    \quad\implies\quad
    \rp(\lambda)=-\frac{1}{2},
  \end{equation}
  i.e., every non-Perron eigenvalue of $A$ has real part $-1/2$.
  Therefore the real parts of the non-Perron eigenvalues
  of $A_\alpha=\alpha kI+(1-\alpha)A$ are
  \begin{equation}
    \alpha k\cdot 1+(1-\alpha)\cdot\left(-\frac{1}{2}\right)
    =\frac{\alpha n-1}{2}.
  \end{equation}
  Hence
  \begin{equation}
    |x_2|=|x_3|=\cdots=|x_n|=\left|\frac{\alpha n-1}{2}\right|,
  \end{equation}
  completing the proof.
\end{proof}

For directed cycles, only specific configurations
satisfy the condition, and there are restrictions on $\alpha$.

\begin{theorem}
  Let $D\cong\overrightarrow{C_k}$ be a directed $k$-cycle and let $\alpha\in[0,1)$.
  Then $A_\alpha(D)$ is normal. Moreover,
  $|x_2|=|x_3|=\dots=|x_k|$ holds if and only if one of the following occurs:
  (i) $k=3$;
  (ii) $k=4$ and $\alpha=1/3$;
  (iii) $k=5$ and $\alpha=1/5$.
\end{theorem}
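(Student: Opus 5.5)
Since every vertex of $D\cong\overrightarrow{C_k}$ has out-degree $1$, we have $\Deg(D)=I$ and $A(D)=P$, the cyclic permutation matrix. Hence
\begin{equation*}
  A_\alpha(D)=\alpha I+(1-\alpha)P
\end{equation*}
is a polynomial in the unitary matrix $P$, so it commutes with its transpose $P^\tp=P^{-1}$ and is therefore normal. Alternatively, normality follows from Theorem~\ref{thm:normal-cond}: $d^+=d^-=1$, $\Delta_{uv}=0$, and $\delta_{uv}=0$ because the out-neighbourhoods of distinct vertices are disjoint singletons, and so are the in-neighbourhoods.

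The spectrum is explicit. With $\omega=e^{2\pi\iu/k}$, the eigenvalues are $\lambda_j=\alpha+(1-\alpha)\omega^j$ for $j=0,\dots,k-1$. The Perron eigenvalue is $\lambda_0=1$, and the remaining real parts are
\begin{equation*}
  x_j=g(c_j),\qquad g(t)\coloneqq\alpha+(1-\alpha)t,\qquad c_j\coloneqq\cos\frac{2\pi j}{k}.
\end{equation*}
Since $c_j=c_{k-j}$, the condition $|x_2|=\dots=|x_k|$ is equivalent to $|g(c)|$ being constant over the distinct values $c\in\{c_j:1\le j\le k-1\}$. The key observation is that $g$ is strictly increasing (because $\alpha<1$). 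Consequently, two distinct values $t\ne t'$ satisfy $|g(t)|=|g(t')|$ if and only if $g(t)=-g(t')$, that is,
\begin{equation*}
  t+t'=-\frac{2\alpha}{1-\alpha}.
\end{equation*}
We assume $k\ge 3$, since for $k=2$ there is only one non-Perron eigenvalue and the condition is vacuous.

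The proof then splits into cases.
\begin{itemize}
  \item[(i)] For $k=3$ there is only one distinct value, $c_1=c_2=-1/2$, so the condition holds for every $\alpha$.
  \item[(ii)] For $k=4$ the distinct values are $0$ and $-1$. Their sum is $-1$, so we need $-2\alpha/(1-\alpha)=-1$, which gives $\alpha=1/3$.
  \item[(iii)] For $k=5$ the distinct values are $\cos(2\pi/5)=(\sqrt5-1)/4$ and $\cos(4\pi/5)=-(\sqrt5+1)/4$, with sum $-1/2$. This forces $4\alpha=1-\alpha$, i.e.\ $\alpha=1/5$.
  \item[(iv)] For $k\ge6$, the angles $2\pi j/k$ with $j=1,2,3$ lie in $(0,\pi]$ and are distinct. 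Since cosine is injective on $[0,\pi]$, this gives three distinct values $c_1,c_2,c_3$. The condition would require each pair among them to have the same sum $-2\alpha/(1-\alpha)$, so $c_1+c_2=c_1+c_3$ and hence $c_2=c_3$, a contradiction. So no $\alpha\in[0,1)$ works.
\end{itemize}
The converse direction in each of (i)–(iii) follows by reading the computations backwards.

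There is no real obstacle here. The only points needing care are excluding the branch $g(t)=g(t')$ through the strict monotonicity of $g$ (which uses $\alpha<1$), and arguing the $k\ge6$ case uniformly via three distinct cosine values rather than case by case.
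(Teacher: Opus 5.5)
Your proof is correct and follows the paper's argument essentially step for step: normality from $A_\alpha=\alpha I+(1-\alpha)P$ with $P$ the cyclic permutation matrix, the explicit real parts $x_j=\alpha+(1-\alpha)\cos(2\pi j/k)$, and the case split $k=3,4,5$ versus $k\ge 6$. Your additions are clarifications rather than a different route: you make explicit that strict monotonicity of $g$ (from $\alpha<1$) rules out the branch $g(t)=g(t')$, you spell out the pairwise-sum contradiction for $k\ge 6$ that the paper only asserts, and you note that the statement tacitly needs $k\ge 3$, since for $k=2$ the condition is vacuous.
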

\begin{proof}
  Notice that the adjacency matrix of $\overrightarrow{C_k}$
  is the $k$-cycle permutation matrix $P$,
  so $PP^\tp=I$ and $P$ is normal.
  Hence $A_\alpha=\alpha I+(1-\alpha)P$ is also normal.
  The eigenvalues of $P$ are $\omega^j$ where
  $\omega=\exp(2\pi\iu/k)$,
  so the eigenvalues of $A_\alpha$ are
  $\lambda_j=\alpha+(1-\alpha)\omega^j$ and
  their real parts are
  $x_j=\alpha+(1-\alpha)\cos(2\pi j/k)$.

  The condition $|x_2|=|x_3|=\dots=|x_k|$ is equivalent to that
  $|\alpha+(1-\alpha)\cos(2\pi j/k)|$
  is constant for all $j=1,2,\dots,k-1$.
  For $k\ge 6$, this gives at least three distinct values, a contradiction.
  Hence $k\in\{3,4,5\}$.

  If $k=3$, then $\cos(2\pi/3)=\cos(4\pi/3)=-1/2$, so
  $x_2=x_3=\frac{3\alpha-1}{2}$
  and the condition holds for all $\alpha$.

  If $k=4$, then $\cos(2\pi/4)=\cos(6\pi/4)=0$ and $\cos(4\pi/4)=-1$,
  so $x_2=\alpha,x_3=2\alpha-1,x_4=\alpha$
  and the condition is $|\alpha|=|2\alpha-1|$.
  With $\alpha\in[0,1)$, this gives $\alpha=1/3$.

  If $k=5$, write $c_1=\cos(2\pi/5)$ and $c_2=\cos(4\pi/5)$. Since $c_1\ne c_2$,
  the condition forces
  $\alpha+(1-\alpha)c_1=-[\alpha+(1-\alpha)c_2]$.
  By $c_1+c_2=-1/2$, this gives $\alpha=1/5$.
\end{proof}

Finally, we establish that any complete bipartite graph
satisfies the condition if and only if $\alpha=1/3$.

\begin{theorem}
  Let $D\cong\overleftrightarrow{K_{t,t}},\,t>1$ and let $\alpha\in[0,1)$.
  Then $A_\alpha(D)$ is normal and satisfies $|x_2|=|x_3|=\dots=|x_{2t}|$
  if and only if $\alpha=1/3$.
\end{theorem}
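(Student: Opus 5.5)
The plan is to reduce the statement to an explicit spectrum computation. Normality needs no work here. Since $D=\overleftrightarrow{K_{t,t}}$ is symmetric, $A(D)$ is symmetric, and $D$ is $t$-regular, so $\Deg(D)=tI$. Hence
\begin{equation}
  A_\alpha(D)=\alpha tI+(1-\alpha)A
\end{equation}
is real symmetric, and therefore normal for every $\alpha\in[0,1)$. Theorem~\ref{thm:normal-cond} could also be checked directly: $d^+=d^-$, $\Delta_{uv}=0$, and $\delta_{uv}=0$ by symmetry. The direct observation is simpler. So the whole content of the statement is the condition on real parts. Because the spectrum is real, these are the eigenvalues themselves.

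Next we compute the spectrum of $A$. Ordering the vertices by the two parts, we get
\begin{equation}
  A=\begin{pmatrix}0&J_t\\J_t&0\end{pmatrix}.
\end{equation}
It has rank $2$. The vector $\mathbf{1}$ is an eigenvector with eigenvalue $t$, and the vector $(\mathbf{1},-\mathbf{1})$ is an eigenvector with eigenvalue $-t$. Any vector orthogonal to both, i.e.\ summing to zero on each part, lies in the kernel. So $\spec(A)=\{t,-t,0^{(2t-2)}\}$. Therefore
\begin{equation}
  \spec(A_\alpha)=\{t,\ (2\alpha-1)t,\ (\alpha t)^{(2t-2)}\}.
\end{equation}
The Perron value is $x_1=\rho(D)=t$, which is the largest since $(2\alpha-1)t<t$ and $\alpha t<t$ for $\alpha<1$.

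The remaining condition is $|x_2|=\dots=|x_{2t}|$. Since $t>1$, there is at least one copy of $\alpha t$ among $x_2,\dots,x_{2t}$, so the condition is exactly $|(2\alpha-1)t|=|\alpha t|$, i.e.\ $|2\alpha-1|=\alpha$. For $\alpha\in[0,1)$ this has two candidate solutions. The first is $2\alpha-1=\alpha$, giving $\alpha=1$, which is excluded. The second is $1-2\alpha=\alpha$, giving $\alpha=1/3$. Conversely, at $\alpha=1/3$ every non-Perron real part has absolute value $t/3$. This proves both directions.

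The only point needing care is justifying that the Perron eigenvalue is the one removed, i.e.\ that $x_1=t$ rather than another eigenvalue. This follows from Theorem~\ref{thm:p-f-thm}, or directly from the explicit list above. I would also remark why the hypothesis $t>1$ matters: for $t=1$ the multiplicity $2t-2$ is zero, and the condition would hold for every $\alpha$.
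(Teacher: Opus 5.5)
Your proposal is correct and follows essentially the same route as the paper: symmetry gives normality, $\Deg(D)=tI$ gives $\spec(A_\alpha)=\{t,(2\alpha-1)t,(\alpha t)^{(2t-2)}\}$, and the condition reduces to $|2\alpha-1|=\alpha$, whose only admissible solution in $[0,1)$ is $\alpha=1/3$. Your additional remarks (the explicit eigenvectors, why $x_1=t$, and why $t>1$ is needed) fill in details the paper leaves implicit.
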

\begin{proof}
  Since $D$ is symmetric, $A(D)$ and $A_\alpha(D)$ are also symmetric and thus is normal.
  Moreover, the underlying undirected graph $K_{t,t}$ is $t$-regular
  and its adjacency spectrum is $\spec(A)=\{t,-t,0^{(2t-2)}\}$.
  Hence $A_\alpha=t\alpha I+(1-\alpha)A$ has spectrum
  $\spec(A_\alpha)=\{t,(2\alpha-1)t,(\alpha t)^{(2t-2)}\}$.
  Therefore $|x_2|=|x_3|=\dots=|x_{2t}|$ holds if and only if
  $|(2\alpha-1)t|=|\alpha t|$. This is equivalent to $\alpha=1/3$.
\end{proof}

\section*{Declaration of competing interest}
There is no competing interest.

\section*{Data availability}
No data was used for the research described in the article.

\section*{Acknowledgement}
The author thanks everyone who contributed
helpful comments that improved the quality
of this manuscript.


\bibliographystyle{elsarticle-num}

\bibliography{cas-refs}

\end{document}